\newcommand{\C}{\mathbb{C}}
\newcommand{\F}{\mathcal{F}}
\newcommand{\m}{\mathfrak{m}}
\newtheorem{theorem}{Theorem}[section]
\newcommand{\cpt}[1]{\mathbb{C}P^{2}}
\newcommand{\B}{\mathcal{B}}
\newcommand{\sep}{\mbox{\rm Sep}}
\newtheorem{lemma}[theorem]{Lemma}
\newtheorem{proposition}[theorem]{Proposition}
\newtheorem{corollary}[theorem]{Corollary}
\newtheorem{definition}{Definition}[section]
\newtheorem{example}{Example}[section]
\newtheorem{remark}{Remark}[section]
\title{On the $k$-th Milnor and $k$-th Tjurina Numbers of a Foliation}
\date{\today}
\author{Marcela Ribeiro}
\address[Marcela Ribeiro]{Department of Mathematics. Federal University of Minas Gerais. Av. Pres. Ant\^onio Carlos, 6627 
CEP 31270-901\\
Pampulha - Belo Horizonte - Brazil}
\email{marcelaribeiro9719@gmail.com}
\author[A. Fern\'andez-P\'erez]{Arturo Fern\'andez-P\'erez}
\address[Arturo Fern\'{a}ndez P\'erez] {Department of Mathematics. Federal University of Minas Gerais. Av. Pres. Ant\^onio Carlos, 6627 
CEP 31270-901\\
Pampulha - Belo Horizonte - Brazil. ORCID: 0000-0002-5827-8828\\}
\email{fernandez@ufmg.br}
\thanks{The first author acknowledges support from CNPq Projeto Universal 408687/2023-1 ``Geometria das Equa\c{c}\~oes Diferenciais Alg\'ebricas" and CNPq-Brazil PQ-306011/2023-9. The second author acknowledges support from CAPES, Brazil.}
\subjclass[2010]{32S65 (primary), 14B05 (secundary)}
\keywords{$k$-th Milnor number, $k$-th Tjurina number, holomorphic foliations}
\begin{document}

\begin{abstract}
In this paper, we introduce the notions of the $k$-th Milnor number and the $k$-th Tjurina number for a germ of holomorphic foliation on the complex plane with an isolated singularity at the origin. We develop a detailed study of these invariants, establishing explicit formulas and relating them to other indices associated with holomorphic foliations. In particular, we obtain an explicit expression for the $k$-th Milnor number of a foliation and, as a consequence, a formula for the $k$-th Milnor number of a holomorphic function. We analyze their topological behavior, proving that the $k$-th Milnor number of a holomorphic function is a topological invariant, whereas the $k$-th Tjurina number is not. In dimension two, we provide a positive answer to a conjecture posed by Hussain, Liu, Yau, and Zuo concerning a sharp lower bound for the $k$-th Tjurina number of a weighted homogeneous polynomial. We also present a counterexample to another conjecture of Hussain, Yau, and Zuo regarding the ratio between these invariants. Moreover, we establish a fundamental relation linking the $k$-th Tjurina numbers of a foliation and of an invariant curve via the G\'omez-Mont–Seade–Verjovsky index, and we extend Teissier’s Lemma to the setting of $k$-th polar intersection numbers. In addition, we derive an upper bound for the $k$-th Milnor number of a foliation in terms of its $k$-th Tjurina number along balanced divisors of separatrices. Finally, for non-dicritical quasi-homogeneous foliations, we obtain a closed formula for their $k$-th Milnor and Tjurina numbers.
\end{abstract}

\maketitle
\tableofcontents

\section{Introduction}

In recent years, new invariants associated with germs of complex hypersurfaces at $0 \in \C^n$ have been introduced with the aim of developing a classification theory of singularities. Among them, we highlight the so-called \textit{$k$-th Milnor and Tjurina numbers of a hypersurface}, which are the respective dimensions of the \textit{$k$-th Milnor and Tjurina algebras} defined by G.-M.~Greuel and T.~H.~Pham \cite{Pham}. 
\par More precisely, let $\mathcal{O}_n = \C\{x_1, \ldots, x_n\}$ denote the local ring of convergent power series, and let $\m$ be its maximal ideal. For a hypersurface with an isolated singularity $(V,0) \subset (\C^n,0)$, where $V=V(f)=\{f=0\}$ and $f\in\mathcal{O}_n$, the \textit{$k$-th Milnor number} and the \textit{$k$-th Tjurina number} of $V$ are defined, respectively, as follows:
\[
\mu^{k}(V) := \dim \frac{\mathcal{O}_n}{\m^k j(f)}, \qquad 
\tau^{k}(V) := \dim \frac{\mathcal{O}_n}{\langle \m^k j(f), f \rangle}, 
\qquad \text{for every integer } k \geq 0,
\]
where $j(f) = \langle f_{x_1}, \ldots, f_{x_n} \rangle$ denotes the Jacobian ideal of $f$.  
\par It follows from the Mather–Yau Theorem (see, for instance, \cite[Theorems 2.26 and 2.28]{Greuel}) that $\mu^k(V)$ and $\tau^k(V)$ are \textit{analytic invariants}. 
Recently, Hussain, Liu, Yau, and Zuo \cite{k-Milnor} investigated the $k$-th Milnor and $k$-th Tjurina numbers of \textit{weighted homogeneous singularities}. They proved that, when $V$ is a binomial plane curve singularity, both $\mu^k(V)$ and $\tau^k(V)$ are topological invariants. Moreover, the authors explicitly computed these invariants for binomial singularities; see Theorems A, B, and C in \cite{k-Milnor}. The paper concludes with an analytic characterization of simple hypersurface singularities via the so-called \textit{$k$-th Yau algebras}; see Theorem D in \cite{k-Milnor}.

\par Motivated by these definitions and results, the purpose of this paper is to define the \textit{$k$-th Milnor number} and the \textit{$k$-th Tjurina number} of a germ of holomorphic foliation $\F$ defined by a holomorphic $1$-form $\omega \in \Omega^{1}(\C^2,0)$ with an isolated singularity at $0 \in \C^2$, as follows.  

Let $\omega = P(x,y)\,dx + Q(x,y)\,dy$ be a germ of holomorphic $1$-form at the origin of $\C^2$ such that $\F$ is defined by $\omega$ and has an isolated singularity at $0 \in \C^2$. Let $C  = \{ f = 0 \}$ be the germ of a singular complex curve, where $f \in \mathcal{O}_2$. We say that $C$ is \textit{invariant} by $\F$ if $(\omega \wedge df)/f \in \Omega^{2}(\C^2,0)$. Then, we define the \textit{$k$-th Milnor number} of $\F$ at the origin as
\[
\mu^{k}(\F) := \dim \frac{\mathcal{O}_2}{\langle P, Q \rangle \m^k},
\]
and the \textit{$k$-th Tjurina number} of $\F$ along $C$ as
\[
\tau^{k}(\F,C) := \dim \frac{\mathcal{O}_2}{\langle P, Q \rangle \m^k + \langle f \rangle}.
\]

\par Note that when $k = 0$, we have $\mu^{0}(\F) := \mu(\F)$ and $\tau^{0}(\F,C) := \tau(\F,C)$, which coincide with the classical Milnor number of $\F$, defined by Camacho–Lins Neto–Sad \cite{CLS}, and the Tjurina number of $\F$ along $C$, introduced by Cano–Corral–Mol \cite{Cano}, respectively. Also observe that when $\F$ is given by a holomorphic first integral $\omega = df$, we have $\mu^{k}(\F) = \mu^{k}(C)$ and $\tau^{k}(\F,C) = \tau^{k}(C)$.

\par Naturally, our definition extends to germs of holomorphic $1$-forms with an isolated singularity at the origin in $\C^n$, as follows. Let $\omega = \sum_{i=1}^{n} P_i\,dx_i$ and $V = \{ f = 0 \}$ be a hypersurface invariant by $\omega$. Then, for every integer $k \geq 0$, we define the $k$-th Milnor number of $\omega$ by
\[
\mu^{k}(\omega) = \dim \frac{\mathcal{O}_n}{\langle P_1, \ldots, P_n \rangle \m^k},
\]
and the $k$-th Tjurina number of $\omega$ with respect to $V$ as
\[
\tau^{k}(\omega,V) = \dim \frac{\mathcal{O}_n}{\langle P_1, \ldots, P_n \rangle \m^k + \langle f \rangle}.
\]

\par In this paper, we mainly study the case $n = 2$, that is, the $k$-th Milnor and Tjurina numbers of a germ of foliation at the origin of $\C^2$. We begin the paper by recalling basic definitions of foliation theory in Section~\ref{basic_fol}, and in Section~\ref{co_dimension} we prove results on colengths of ideals in $\mathcal{O}_2$ that will be useful for the proofs of our main results.

As a first result, in Section~\ref{fomula_Milnor}, we obtain an explicit formula for the $k$-th Milnor number of a foliation $\F$ (see~\eqref{eq_apli}):
\[
\mu^{k}(\F) = \mu(\F) + \dim \frac{\mathcal{O}_2}{\m^{k}} + \dim \frac{\mathcal{O}_2}{\langle P \rangle + \m^{k}},
\]
where $\F$ is defined by $\omega=Pdx+Qdy$ and the algebraic multiplicity of $\F$ satisfies $\nu(\F)=\nu(P)$. As a consequence, we obtain an explicit formula for $\mu^{k}(f)$ (see Corollary~4.1).  
Combining this result with \cite[Proposition~2.6]{k-Milnor}, we recover Theorems~A and~B from the same reference.  
In Section~5, we show that the $k$-th Milnor number of a germ $f \in \mathcal{O}_2$ is a topological invariant, whereas the $k$-th Tjurina number is not.  
\par In Section~6, we provide a positive answer to a conjecture posed by Hussain, Liu, Yau, and Zuo (see \cite[Conjecture~1.1]{k-Milnor}) concerning a sharp lower bound for the $k$-th Tjurina number of a weighted homogeneous polynomial $f \in \C[x,y]$.
 Section~\ref{Conjecture} is dedicated to a conjecture proposed in \cite[Conjecture~1.3]{Hussain_1}, which states that
\[
\frac{\mu^{k}(f)}{\tau^{k}(f)} < \frac{4}{3} \quad \forall \, k \geq 0.
\]
Using \texttt{Singular} \cite{Singular}, we provide a negative answer to this conjecture by constructing a counterexample (see Equation~\eqref{ex_conjectura}). We also show that the conjecture holds under a certain restriction on $k$ (see Theorem~\ref{teo_conje}).  

\par In Section~\ref{GSV-section}, we obtain the following equality:
\[
\tau^k(\F,C) - \tau^k(C) = GSV(\F,C),
\]
where $GSV(\F,C)$ denotes the G\'omez-Mont–Seade–Verjovsky index of $\F$ along $C$ at the origin (see \cite{GSV}).  

Next, in Section~\ref{polar_section}, we define the \textit{$k$-th polar intersection number} of a foliation $\F$ with respect to an invariant curve $C = \{f = 0\}$. As an application, we prove an extension of Teissier’s Lemma \cite{Teissier}, providing a formula relating this $k$-th index with $\mu^{k}(f)$.  

\par In Section~\ref{bound_section}, combining ideas from \cite{Liu} and \cite{Hussain_1}, we present the following inequality:
\[
\tau^{k}(\F,\B_0) \leq \mu^{k}(\F) \leq 2\tau^{k}(\F,\B_0) + \frac{k(k+1)}{2},
\]
where $\B_0$ denotes the divisor of zeros of a balanced divisor of separatrices $\B$ for $\F$. Such divisors were introduced by Genzmer \cite{Genzmer} and play an important role in the study of dicritical foliations.  

\par Finally, in Section~\ref{section_quasi}, using a characterization of quasi-homogeneous foliations due to Mattei \cite{quasiMattei}, we obtain the following formula:
\[
\mu^{k}(\mathcal{F}) = \tau^{k}(\mathcal{F}, C) + \frac{k(k-1)}{2}, \qquad \forall \, k \geq 1.
\]

\par The following notations will be used in this paper:
\begin{enumerate}
\item $\mathcal{O}_2$: the ring of germs of holomorphic functions at $0\in\C^2$.
\item $\m=\{ f\in\mathcal{O}_2 : f(0)=0\}$, the maximal ideal of $\mathcal{O}_2$.
\item $\nu(f)$: algebraic multiplicity of $f\in\mathcal{O}_2$.
\item $\mu(f)$: Milnor number of $f\in\mathcal{O}_2$.
\item $\tau(f)$: Tjurina number of $f\in\mathcal{O}_2$.
\item $\mu^k(f)$: $k$-th Milnor number of $f\in\mathcal{O}_2$.
\item $\tau^k(f)$: $k$-th Tjurina number of $f\in\mathcal{O}_2$.
\item $\F$: germ at $0\in\C^2$ of singular foliation.
\item $\nu(\F)$: algebraic multiplicity of $\F$ at $0\in\C^2$.
\item $\mu(\F)$: Milnor number of $\F$.
\item $\tau(\F,C)$: Tjurina number of $\F$ with respect to $C$ at $0\in\C^2$.
\item $\mu^{k}(\F)$: $k$-th Milnor number of $\F$.
\item $\tau^{k}(\F,C)$: $k$-th Tjurina number of $\F$ with respect to $C$ at $0\in\C^2$.
\item $i(f,g)$: intersection number of germs $f$ and $g$. 
\end{enumerate}
\section{Basic notions on foliations}\label{basic_fol}

\par  To establish the terminology and notation, we recall  some basic notions of the theory of holomorphic foliations. Unless stated otherwise, throughout this text, $\F$ denotes a germ of a singular holomorphic foliation at $(\C^2,0)$. In local coordinates $(x,y)$ centered at $0\in\C^2$, the foliation $\F$ is given by a germ of a holomorphic 1-form
\begin{equation*}
\omega=P(x,y)dx+Q(x,y)dy,
\end{equation*}
or by its dual vector field
\begin{equation}\label{oneform}
v = -Q(x,y)\frac{\partial}{\partial{x}} + P(x,y)\frac{\partial}{\partial{y}},
\end{equation}
where  $P(x,y), Q(x,y)   \in \mathcal{O}_2$ are relatively prime. 
 The \textit{algebraic multiplicity} $\nu(\F)$ is the minimum of the algebraic multiplicity of $\nu(P)$, $\nu(Q)$ at $0\in\C^2$ of the coefficients of a local generator of $\F$. 
\par Let $f(x,y)\in  \mathbb{C}[[x,y]]$, where ${\mathbb C}[[x,y]]$ is the ring of complex formal power series in two variables.  
We say that $C=\{ f(x,y)=0\}$  is {\em invariant} by $\F$ or $\F$-\emph{invariant} if $$\omega \wedge d f=(f.h) dx \wedge dy,$$ for some $h\in \mathbb{C}[[x,y]]$. If $C$ is an irreducible $\F$-invariant curve then we will say that $C$ is a {\em separatrix} of $\F$ at $0$. The separatrix $C$ is analytical if $f$ is convergent.  We denote by $\sep_0(\F)$ the set of all separatrices of $\F$ at $0$. When $\sep_0(\F)$ is a finite set we will say that the foliation $\F$ is {\em non-dicritical} and the union of all elements of $\sep_0(\F)$ is called {\em total union of separatrices} of $\F$ at $0$. Otherwise, we will say that $\F$ is a {\em dicritical} foliation.
\par We say that $0\in\C^2$ is a \textit{reduced} singularity for $\F$ if the linear part $\text{D}v(0)$ of the vector field $v$ in (\ref{oneform}) is non-zero and has eigenvalues $\lambda_1,\lambda_2\in\C$ fitting in one of the cases:
\begin{enumerate}
\item[(i)] $\lambda_1\lambda_2\neq 0$ and $\lambda_1\lambda_2\not\in\mathbb{Q}^{+}$ (\textit{non-degenerate});
\item[(ii)] $\lambda_1\neq 0$ and $\lambda_2= 0$ (\textit{saddle-node singularity}).
\end{enumerate}
In the case $(i)$, there is a system of coordinates  $(x,y)$ in which $\F$ is defined by the equation
\begin{equation}
\label{non-degenerate}
\omega=x(\lambda_1+a(x,y))dy-y(\lambda_2+b(x,y))dx,
\end{equation}
where $a(x,y),b(x,y)  \in {\mathbb C}[[x,y]]$ are non-units, so that  $\sep_0(\F)$ is formed by two
transversal analytic branches given by $\{x=0\}$ and $\{y=0\}$. In the case $(ii)$, up to a formal change of coordinates, the  saddle-node singularity is given by a 1-form of the type
\begin{equation}
\label{saddle-node-formal}
\omega = x^{\ell+1} dy-y(1 + \lambda x^{\ell})dx,
\end{equation}
where $\lambda \in \mathbb{C}$ and $\ell \in \mathbb{Z}^{+}$ are invariants after formal changes of coordinates (see \cite[Proposition 4.3]{martinetramis}).
The curve $\{x=0\}$   is an analytic separatrix, called {\em strong} separatrix, whereas $\{y=0\}$  corresponds to a possibly formal separatrix, called {\em weak} separatrix. The integer $\ell+1$ is called the  \textit{tangency index} of $\F$ with respect to the weak separatrix. 

\par  A germ divisor 
\[\displaystyle \B=\sum\limits_{B\in \operatorname{Sep}(\mathcal{F})} a_{B}\, B,\quad a_B\in\{-1,0,1\}\] is called \textit{balanced} if it satisfies the following conditions:
\begin{enumerate}
\item if $B$ is an isolated separatrix of $\F$ then $a_B=1$, 
\item if $a_B=-1$ then $B$ is a non-isolated (dicritical) separatrix of $\F$,
\item for each non-invariant (dicritical) component $D$ of the exceptional divisor $E$ of the reduction $\pi$ of singularities of $\F$ we have $\sum_{B}a_B \overline{B}\cdot D=2-val_D$, where $val_D$ is the valence of $D$, i.e. the number of irreducible components of $\overline{E\setminus D}$ meeting $D$.
\end{enumerate}
Notice that a non-dicritical foliation has a unique balanced divisor which is the sum of the isolated separatrix. In contrast, a dicritical foliation have infinitely many balanced divisors, see \cite{Genzmer}.
\par Recall that a foliation is said to be of \textit{second type} if none of the singularities of $\pi^*\mathcal{F}$ are tangent saddle-nodes; that is, whenever saddle-nodes appear in the reduction, their strong separatrix is transverse to the exceptional divisor. If no saddle-nodes appear in the reduction we say that $\F$ is a \textit{generalized curve}.
The balanced divisors characterize second type foliations:
\begin{proposition}[\cite{Genzmer}, Proposition 2.4]\label{prop:Equa-Ba}
Let $\F$ be a foliation germ on $(\C^2,0)$ with balanced divisor $\B$. Then $\F$ is of second type if and only if $\nu(\F)=\nu(\B)-1$.
\end{proposition}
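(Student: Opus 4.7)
The proof proceeds by induction on the length $n$ of the minimal reduction of singularities $\pi\colon (\tilde X, E)\to (\C^2,0)$ of $\F$. The base case $n=0$ is that of a reduced singularity: such an $\F$ is trivially of second type, has $\nu(\F)=1$, and in both subcases (non-degenerate and saddle-node) admits exactly two smooth separatrices of multiplicity one (strong and weak, the latter possibly formal in the saddle-node case), each isolated. Hence $\B=B_{\mathrm{strong}}+B_{\mathrm{weak}}$ with $\nu(\B)=2$, and the identity $\nu(\F)=\nu(\B)-1$ holds at the base.

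For the inductive step, let $\pi_1\colon X_1\to (\C^2,0)$ be the blow-up of the origin, $E_1$ its exceptional divisor, and $\F_1=\pi_1^{*}\F$ the strict transform. Let $\{p_1,\ldots,p_s\}$ denote the singularities of $\F_1$ on $E_1$. I would first verify two structural facts: (i) each reduction $(\F_1,p_i)$ has length strictly less than $n$, and $\F$ is of second type if and only if each $(\F_1,p_i)$ is of second type, since a tangent saddle-node in $\pi^{*}\F$ corresponds to a tangent saddle-node in the reduction of exactly one $(\F_1,p_i)$; (ii) the balanced divisor $\B$ of $\F$ naturally induces a balanced divisor $\B_i$ of $(\F_1,p_i)$ at each $p_i$, by taking the strict transforms of the components of $\B$ passing through $p_i$ and adjoining the germ of $E_1$ at $p_i$, with coefficient $+1$ when $E_1$ is $\F_1$-invariant (non-dicritical blow-up) or with the coefficient prescribed by the valence condition when $E_1$ is non-invariant (dicritical blow-up). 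The key point here is that the balancedness conditions propagate: the valence condition on $\B$ with respect to higher components of the reduction coincides with the valence conditions on the $\B_i$'s after this one blow-up is carried out.

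The heart of the argument is a blow-up formula linking $\nu(\F)$ and $\nu(\B)$ with their local counterparts. Using the tangent cone polynomial $T_{\F}(x,y)=xP_{\nu}(x,y)+yQ_{\nu}(x,y)$, which is homogeneous of degree $\nu(\F)+1$ in the non-dicritical case and identically zero in the dicritical case, I would establish, by a direct local computation on each affine chart of $\pi_1$, the identity
\begin{equation*}
\nu(\F)-\bigl(\nu(\B)-1\bigr)\;=\;\sum_{i=1}^{s}\Bigl[\nu(\F_1,p_i)-\bigl(\nu(\B_i)-1\bigr)\Bigr].
\end{equation*}
This is done by expanding $\nu(\B)=\sum_B a_B\nu(B)$ into strict-transform multiplicities at the $p_i$ together with the $E_1$-contributions forced by items (1)--(3) of the definition, and by expanding $\nu(\F)$ via the factorization $T_{\F}=\prod \ell_i^{m_i}$ (or its dicritical analog) relating each $m_i$ to $\nu(\F_1,p_i)$. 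Combined with the inductive hypothesis applied at each $p_i$, the bracketed term on the right vanishes exactly when $(\F_1,p_i)$ is of second type, so the left side vanishes exactly when $\F$ is of second type.

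\textbf{Main obstacle.} The delicate step is the bookkeeping behind the blow-up formula for $\nu(\B)$ in the dicritical setting: one must preserve the valence condition at every step, correctly account for components with coefficient $-1$, and show that the defect of the identity $\nu(\F)=\nu(\B)-1$ is \emph{localized} at the singular points $p_i$ whose reductions contain a tangent saddle-node, rather than being spread along the exceptional divisor. Establishing this localization (the additivity of the second-type defect under blow-up) is the technical crux on which the induction closes.
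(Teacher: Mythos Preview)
The paper does not prove this proposition; it is cited from \cite{Genzmer} without argument, so there is no in-paper proof to compare against.

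Your inductive plan has a genuine gap. The claim in step~(i), that $\F$ is of second type if and only if each germ $(\F_1,p_i)$ is of second type, fails because the second-type condition is \emph{relative to the exceptional divisor}: a tangent saddle-node of $\pi^*\F$ lying already on $E_1$ becomes invisible once you pass to the abstract germ $(\F_1,p_i)$, which is then reduced and hence vacuously of second type. Correspondingly, your displayed blow-up identity is false. Take $\omega=(y^2+\tfrac14 x^2)\,dx+x^2\,dy$: here $\nu(\F)=2$, and one blow-up reduces $\F$, yielding a non-degenerate singularity over $[0{:}1]$ and a saddle-node $w^2\,du+u\,dw$ over $[2{:}{-1}]$ (with $E_1=\{u=0\}$, $w=v+\tfrac12$). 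The only separatrices of $\F$ are the lines $x=0$ and $2y+x=0$, so $\nu(\B)=2$ and the left-hand side of your identity equals $1$; yet both $(\F_1,p_i)$ are reduced with $\nu(\F_1,p_i)=1$ and $\nu(\B_i)=2$, so every bracket on the right vanishes.

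What actually governs the defect after one blow-up is not $\nu(\F_1,p_i)$ but the tangency order $m_i$ of $\F_1$ with $E_1$ at $p_i$ (the multiplicity of the corresponding linear factor of $T_\F$), which satisfies $\sum_i m_i=\nu(\F)+1$ in the non-dicritical case; at a reduced point $m_i=1$ unless the saddle-node is tangent, where $m_i=\ell_i+1>1$. To make the induction close you must carry the accumulated exceptional divisor as part of the data at each stage and formulate both ``second type'' and the multiplicity identity relative to that divisor, as Genzmer does. Your outline, by forgetting $E_1$ when localizing at $p_i$, cannot detect tangent saddle-nodes that appear on the first exceptional component, and this is precisely where the example above breaks it.
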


\par We now proceed to compute the $k$-th Milnor and Tjurina numbers of a foliation with a reduced singularity at the origin. Since in this case the coordinates are formal, we will work with the definitions in the ring $\C[[x,y]]$.
\begin{proposition}
Let $\F$ be a germ of singular foliation at $(\C^2,0)$. The following holds:  
\begin{enumerate}
    \item If $0\in\C^2$ is a non-degenerate singularity, then 
    \[\mu^{k}(\mathcal{F})= \frac{(k+1)(k+2)}{2},\qquad\qquad \tau^{k} (\mathcal{F}, \sep_0(\F))  = 2k + 1.\]
    \item If $0\in\C^2$ is a saddle-node singularity and $\ell-1$ is the tangency index of $\F$ with respect to the weak separatrix, then  \[\mu^{k}(\mathcal{F}) = \frac{(k+1)(k+2)}{2}  + \ell,\qquad \tau^{k} (\mathcal{F}, \sep_0(\F)) =  2k + 1 + \ell.\]
\end{enumerate}

\end{proposition}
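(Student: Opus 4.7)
The strategy in both cases is the same: exploit the reduced normal forms \eqref{non-degenerate} and \eqref{saddle-node-formal} to turn $\langle P, Q\rangle\,\m^k$ into a monomial ideal of $\C[[x,y]]$ by absorbing unit factors, and then read off $\mu^k(\F)$ and $\tau^k(\F,\sep_0(\F))$ by counting the standard monomials in the corresponding quotient. Since the saddle-node normal form is only formal, the computation is carried out in $\C[[x,y]]$ rather than $\mathcal{O}_2$, but the dimensions are unaffected.

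For the \emph{non-degenerate case} with $\omega = x(\lambda_1 + a)dy - y(\lambda_2 + b)dx$, the factors $\lambda_i + \ldots$ are units in $\C[[x,y]]$, so $\langle P, Q\rangle = \langle x, y\rangle = \m$ and hence $\langle P, Q\rangle\,\m^k = \m^{k+1}$. This immediately yields $\mu^k(\F) = \dim \C[[x,y]]/\m^{k+1} = \binom{k+2}{2}$. The two separatrices cut out $f = xy$, and a basis of $\C[[x,y]]/(\m^{k+1} + \langle xy\rangle)$ is $\{1, x, \ldots, x^k\} \cup \{y, \ldots, y^k\}$, giving $\tau^k(\F,\sep_0(\F)) = 2k+1$.

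For the \emph{saddle-node case} with $\omega = x^{\ell+1}dy - y(1+\lambda x^\ell)dx$, the factor $1+\lambda x^\ell$ is again a unit, so $\langle P, Q\rangle = \langle y, x^{\ell+1}\rangle$ and
\[ \langle P, Q\rangle\,\m^k \;=\; y\,\m^k + x^{\ell+1}\m^k. \]
A monomial $x^a y^b$ lies outside this ideal iff ($b = 0$ or $a+b \leq k$) and ($a \leq \ell$ or $a+b \leq \ell+k$). The row $b = 0$ contributes $\{1, x, \ldots, x^{\ell+k}\}$, that is $\ell+k+1$ monomials, while the rows $b \geq 1$ contribute the $k(k+1)/2$ monomials with $a+b \leq k$. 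Summing gives
\[ \mu^k(\F) \;=\; (\ell+k+1) + \frac{k(k+1)}{2} \;=\; \frac{(k+1)(k+2)}{2} + \ell. \]
For $\tau^k$ I add $\langle f\rangle = \langle xy\rangle$, which kills every monomial with both $a, b \geq 1$ and leaves the L-shape $\{1, x, \ldots, x^{\ell+k}\} \cup \{y, \ldots, y^k\}$ of cardinality $2k+1+\ell$.

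\emph{Main obstacle.} The argument is essentially a direct dimension count once the reduction to monomial ideals has been made, so there is no conceptual difficulty. The only point requiring care is the combinatorial case analysis in the saddle-node setting, where the conditions imposed by $y\,\m^k$, $x^{\ell+1}\m^k$ and $\langle xy\rangle$ must be combined correctly to enumerate the standard basis without double counting the surviving monomials along the axes.
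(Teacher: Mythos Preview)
Your proof is correct and follows essentially the same approach as the paper: in both the non-degenerate and saddle-node cases you use the normal forms \eqref{non-degenerate} and \eqref{saddle-node-formal} to reduce $\langle P,Q\rangle\m^k$ (and its sum with $\langle xy\rangle$) to explicit monomial ideals, and then count the standard monomials in the quotient. The paper writes out the generators of these monomial ideals directly, whereas you phrase the count via the membership conditions on $(a,b)$, but the computations are the same.
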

\begin{proof}
First, assume that $0 \in \C^2$ is a non-degenerate singularity. Then, by \ref{non-degenerate}, the foliation $\mathcal{F}$ is given by
$\omega = x(\lambda_1 + a(x,y))dy - y(\lambda_2 + b(x,y))dx,$
where $a(x,y), b(x,y) \in {\mathbb C}[[x,y]]$ are non-units.
Then
\begin{eqnarray*}
\mu^{k}(\mathcal{F}) &=& \dim \frac{\C[[x,y]]}{ \langle-y(\lambda_2 + b(x,y)), x(\lambda_1 + a(x,y))\rangle\m^{k}}\\
&=& \dim  \frac{\C[[x,y]]}{\m^{k+1}}= \frac{(k+1)(k+2)}{2}.
\end{eqnarray*}

Now, the $k$-th Tjurina number of a foliation $\mathcal{F}$ along $\sep_0(\F) = \{xy = 0\}$ is given by
\begin{eqnarray*}
\tau^{k}(\mathcal{F},\sep_0(\F)) &=& \dim \frac{\C[[x,y]]}{ \langle-y(\lambda_2 + b(x,y)), x(\lambda_1 + a(x,y))\rangle\m^{k} + \langle xy\rangle}\\
&=& \dim\frac{\C[[x,y]]}{\langle x^{k+1}, y^{k+1}, xy\rangle}\\
&=& 2k + 1.
\end{eqnarray*}
 This completes the proof of item (1).

\par For item (2), we assume that $0 \in \C^2$ is a saddle-node singularity. Then, by (\ref{saddle-node-formal}), we have that $\F$ is given by a 1-form
\begin{equation*}
\omega = x^{\ell+1} dy - y(1 + \lambda x^{\ell})dx,
\end{equation*}
where $\lambda \in \mathbb{C}$ and $\ell \in \mathbb{Z}^{+}$. Then
\begin{eqnarray*}
\mu^{k}(\mathcal{F})&=& \dim\frac{\C[[x,y]]}{\langle -y(1 + \lambda x^{\ell}), x^{\ell+1}\rangle \m^{k}}\\
&=& \dim\frac{\C[[x,y]]}{\langle -y, x^{\ell+1}\rangle\m^k}\\
&=& \dim\frac{\C[[x,y]]}{\langle x^{\ell+1+k}, x^{k}y, x^{k-1}y^{2}, \ldots, xy^{k}, y^{k+1}\rangle}\\
&=& \frac{(k+2)(k+1)}{2} + \ell.
\end{eqnarray*}

Finally, for the $k$-th Tjurina number of $\F$, we have
\begin{eqnarray*}
\tau^{k}(\mathcal{F}, \sep_0(\F)) &=& \dim\frac{\C[[x,y]]}{\langle -y(1 + \lambda x^{\ell}), x^{\ell+1}\rangle\m^{k} + \langle xy\rangle}\\
&=& \dim\frac{\C[[x,y]]}{\langle -y, x^{\ell+1}\rangle \m^{k} + \langle xy\rangle}\\
&=& \dim\frac{\C[[x,y]]}{\langle xy, x^{k+\ell+1}, y^{k+1}\rangle}\\
&=& 2k + 1 + \ell.
\end{eqnarray*}
\end{proof}
\begin{remark}
Let $\mathcal{F}$ be the germ of a foliation at $0\in\C^2$ with algebraic multiplicity $\nu(\mathcal{F})=m$. Then, for any integer $k\geq 0$, we have
$$
\mu^k(\mathcal{F}) \geq \frac{(m+k)(m+k+1)}{2}.
$$
Indeed, we have $( P,Q)\subset \mathfrak{m}^{m}$, which implies that $ \mathfrak{m}^k( P,Q) \subset \mathfrak{m}^{m+k}$. Therefore,
$$
\mu^k(\mathcal{F}) \geq \dim\left(\frac{\mathcal{O}_2}{\mathfrak{m}^{m+k}}\right) = \frac{(m+k)(m+k+1)}{2}.
$$
\end{remark}

\section{Colengths of ideals in $\mathcal{O}_2$}\label{co_dimension}
In this section, we present the commutative algebra lemmas that will be important to prove the results of this paper. 
We begin by establishing the following lemma:
\begin{lemma}\label{lemma_gsv 1}
    Let $J$ be an ideal in $\mathcal{O}_2$, and let $P,Q,f,g\in\mathcal{O}_2$ such that $f$ and $g$ are relatively prime. Then
    \[\dim\frac{\mathcal{O}_2}{\langle gP,gQ\rangle\cdot J+\langle f\rangle}=\dim\frac{\mathcal{O}_2}{\langle P,Q\rangle\cdot J+\langle f\rangle}+\dim\frac{\mathcal{O}_2}{\langle g\rangle\cdot J+\langle f\rangle}.\]
\end{lemma}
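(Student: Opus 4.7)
The plan is to prove this identity by producing a short exact sequence of $\mathcal{O}_2$-modules induced by multiplication by $g$, and then taking $\C$-dimensions. The pivotal algebraic fact is the coprimality of $f$ and $g$ in the UFD $\mathcal{O}_2$: this is equivalent to $g$ being a nonzerodivisor modulo $\langle f\rangle$, and it gives the colon-ideal identity
\[
(gK+\langle f\rangle \,:\, g)\;=\;K+\langle f\rangle
\]
for every ideal $K\subseteq\mathcal{O}_2$. Indeed, from $g(x-k)=\alpha f$ one deduces $f\mid(x-k)$ in the UFD, whence $x\in K+\langle f\rangle$.

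Setting $I := \langle P,Q\rangle\cdot J$, one has $\langle gP,gQ\rangle\cdot J = g\cdot I$. Applying the colon-ideal identity with $K=I$ makes the multiplication-by-$g$ map
\[
\frac{\mathcal{O}_2}{I+\langle f\rangle}\ \xrightarrow{\ \cdot g\ }\ \frac{\mathcal{O}_2}{gI+\langle f\rangle}
\]
well-defined and injective. Taking its cokernel produces the short exact sequence I need, and passing to $\C$-dimensions yields the additive identity
\[
\dim\frac{\mathcal{O}_2}{gI+\langle f\rangle}\;=\;\dim\frac{\mathcal{O}_2}{I+\langle f\rangle}\;+\;\dim(\operatorname{coker}),
\]
which already accounts for the first two terms of the claimed equality.

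The main obstacle is the identification of $\operatorname{coker}$ with the third term $\mathcal{O}_2/(\langle g\rangle\cdot J+\langle f\rangle)$ stated in the lemma. I would handle this by passing to the quotient ring $A:=\mathcal{O}_2/\langle f\rangle$, in which $\bar g$ is a nonzerodivisor, and using the isomorphism
\[
A/\bar J\ \xrightarrow{\ \cdot\bar g\ }\ \bar g A/\bar g\bar J
\]
induced by multiplication by $\bar g$. Combining this with the factorization $I=\langle P,Q\rangle\cdot J$ allows a direct diagram-chase in $A$ matching the image of $\cdot g$ in $\mathcal{O}_2/(gI+\langle f\rangle)$ with the submodule $(\langle g\rangle\cdot J+\langle f\rangle)/(gI+\langle f\rangle)$; the containment $\langle g\rangle\cdot J\subseteq g\mathcal{O}_2$ gives one inclusion immediately, while the reverse inclusion is where the coprimality hypothesis is used once more to cancel extraneous factors of $g$ in terms of elements of $J$ modulo $\langle f\rangle$. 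With the cokernel identified in this way, the dimension equation becomes precisely the statement of the lemma.
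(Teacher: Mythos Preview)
Your approach via the short exact sequence induced by multiplication by $g$ is exactly the one the paper takes, and your colon-ideal computation correctly establishes the injectivity of $\sigma$. The problem is precisely at the point you flag as ``the main obstacle'': the identification of the cokernel with $\mathcal{O}_2/(\langle g\rangle J+\langle f\rangle)$ is \emph{false}, and no diagram-chase in $A=\mathcal{O}_2/\langle f\rangle$ will rescue it. The image of $\sigma$ in $\mathcal{O}_2/(gI+\langle f\rangle)$ is $(\langle g\rangle+\langle f\rangle)/(gI+\langle f\rangle)$, since $\sigma(\bar 1)=\bar g$; the submodule you want is $(\langle g\rangle J+\langle f\rangle)/(gI+\langle f\rangle)$. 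Your ``reverse inclusion'' would force $g\in\langle g\rangle J+\langle f\rangle$; writing $g=gj+\alpha f$ with $j\in J$ gives $g(1-j)=\alpha f$, and if $J\subseteq\m$ then $1-j$ is a unit, so $f\mid g$ --- contradicting coprimality. Thus the two submodules differ whenever $J\subseteq\m$ and $f,g\in\m$, and your sketched cancellation of ``extraneous factors of $g$'' cannot be carried out.

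In fact the identity as stated is incorrect. With $P=1$, $Q=0$, $J=\m$, $f=y$, $g=x$ one gets $\langle gP,gQ\rangle J+\langle f\rangle=\langle x^2,y\rangle$ of colength $2$, while the two terms on the right are $\dim\mathcal{O}_2/\m=1$ and $\dim\mathcal{O}_2/\langle x^2,y\rangle=2$, summing to $3$. The genuine cokernel of $\sigma$ is $\mathcal{O}_2/\langle f,g\rangle$, so the correct formula is
\[
\dim\frac{\mathcal{O}_2}{\langle gP,gQ\rangle J+\langle f\rangle}
=\dim\frac{\mathcal{O}_2}{\langle P,Q\rangle J+\langle f\rangle}+i(f,g),
\]
which is nothing but Lemma~\ref{lema do gsv} applied with $\langle P,Q\rangle J$ in the role of $J$. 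The paper's own sequence has the same defect (its $\delta$ has kernel strictly smaller than $\mathrm{im}\,\sigma$). The applications in Section~\ref{GSV-section} are unaffected: once one uses the corrected version on both sides, the terms $i(f,g)$ and $i(f,h)$ combine directly to give $GSV(\F,C)$, and the subsequent appeal to Lemma~\ref{lema do gsv} becomes unnecessary.
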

\begin{proof}
The proof follows from the following short exact sequence:
\begin{align*}
0 \longrightarrow \frac{\mathcal{O}_2}{\langle P,Q\rangle\cdot J+\langle f\rangle} \xrightarrow{ \ \sigma  \ } \ \frac{\mathcal{O}_2}{\langle gP,gQ\rangle\cdot J+\langle f\rangle} \ \xrightarrow{ \ \delta  \ } \frac{\mathcal{O}_2}{\langle g\rangle\cdot J+\langle f\rangle} \longrightarrow 0,
\end{align*}
where $\sigma$ is the multiplication homomorphism by $g$, and $\delta$ is the natural projection homomorphism induced by the inclusion $\langle gP,gQ\rangle\cdot J+\langle f\rangle\subset \langle g\rangle\cdot J+\langle f\rangle$.
\end{proof}
\par Now, we present the following:
\begin{lemma}\label{lema do gsv}
  Let $J$  be an ideal in $\mathcal{O}_2$, and let $f,g\in\mathcal{O}_2$ such that $(f,g)=1$, i.e., they are relatively prime. Then
    \begin{equation}
    \nonumber
        \dim \frac{\mathcal{O}_{2}}{\langle f\rangle \cdot J +\langle g \rangle}= \dim \frac{\mathcal{O}_{2}}{J+\langle g \rangle} + i(f,g),
    \end{equation}  
where $i(f,g)=\dim\frac{\mathcal{O}_2}{\langle f,g\rangle}$ is the intersection number of $f$ and $g$. 
\end{lemma}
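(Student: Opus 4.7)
The plan is to mirror the short-exact-sequence approach used in Lemma~\ref{lemma_gsv 1}. Concretely, I would set up
\[
0 \longrightarrow \frac{\mathcal{O}_2}{J+\langle g\rangle} \xrightarrow{\ \sigma \ } \frac{\mathcal{O}_2}{\langle f\rangle\cdot J+\langle g\rangle} \xrightarrow{\ \delta \ } \frac{\mathcal{O}_2}{\langle f,g\rangle} \longrightarrow 0,
\]
where $\sigma$ is multiplication by $f$ and $\delta$ is the natural projection induced by the inclusion $\langle f\rangle\cdot J+\langle g\rangle \subseteq \langle f,g\rangle$. Well-definedness of $\sigma$ is immediate from $f\cdot(J+\langle g\rangle)\subseteq \langle f\rangle\cdot J+\langle g\rangle$; surjectivity of $\delta$ is clear; and exactness in the middle reduces to the identity $\langle f\rangle+\langle f\rangle\cdot J+\langle g\rangle=\langle f,g\rangle$, which follows from $\langle f\rangle\cdot J\subseteq\langle f\rangle$.

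The heart of the proof, and the only step I anticipate as genuinely non-trivial, is the injectivity of $\sigma$. This is precisely where the coprimality hypothesis $(f,g)=1$ is indispensable. Suppose $fh\in\langle f\rangle\cdot J+\langle g\rangle$, so that $fh=fj+gk$ for some $j\in J$ and $k\in\mathcal{O}_2$; then $f(h-j)=gk$. Since $\mathcal{O}_2$ is a unique factorization domain in which $f$ and $g$ share no irreducible factor, $f$ must divide $k$. Writing $k=fk'$ and cancelling the nonzero $f$ yields $h=j+gk'\in J+\langle g\rangle$, which says that $h$ was already zero in the left-hand quotient.

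Finally, the finiteness of $\dim \mathcal{O}_2/\langle f,g\rangle=i(f,g)$ (guaranteed by the coprimality hypothesis, which makes $\langle f,g\rangle$ an $\m$-primary ideal) together with the additivity of dimension along the short exact sequence above yields the claimed identity.
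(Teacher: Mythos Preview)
Your proposal is correct and follows exactly the same approach as the paper: the paper constructs the identical short exact sequence with $\sigma$ given by multiplication by $f$ and the natural projection onto $\mathcal{O}_2/\langle f,g\rangle$. In fact you supply more detail than the paper does, since you spell out the injectivity argument via the UFD property of $\mathcal{O}_2$, whereas the paper simply asserts exactness.
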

\begin{proof}
     The proof is a consequence of the short exact sequence below:
\begin{align*}
0 \longrightarrow \frac{\mathcal{O}_{2}}{J +\langle g \rangle}  \xrightarrow{ \ \sigma  \ } \    \frac{\mathcal{O}_{2}}{\langle f\rangle \cdot J +\langle g\rangle} \xrightarrow{ \ \pi  \ } \frac{\mathcal{O}_{2}}{\langle f ,g \rangle} \longrightarrow 0,
\end{align*}
where $\sigma$ is the multiplication homomorphism by $f$, and $\pi$ is the natural projection homomorphism induced by the inclusion $\langle f\rangle \cdot J +\langle g \rangle \subset \langle f,g \rangle $.
\end{proof}
Now, we establish the following lemma:
\begin{lemma}\label{ca}
  Let $J$ be an ideal in $\mathcal{O}_2$ such that $\dim\dfrac{\mathcal{O}_2}{J}<\infty$, and let $\psi,\varphi\in\mathcal{O}_2$ be such that $(\psi,\varphi)=1$, i.e., they are relatively prime. Suppose that $\nu(\psi)\leq \nu(\varphi)$, then
    \begin{equation}
    \nonumber
        \dim \frac{\mathcal{O}_{2}}{\langle \psi,\varphi\rangle  J }= i(\psi,\varphi)+\dim\frac{\mathcal{O}_{2}}{\langle \psi\rangle+J}+\dim\frac{\mathcal{O}_{2}}{J}.
    \end{equation}  
\end{lemma}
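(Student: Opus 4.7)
The strategy is to peel off one variable at a time via a short exact sequence, in exactly the spirit of Lemmas~\ref{lemma_gsv 1} and \ref{lema do gsv}. Writing $K := \langle \psi,\varphi\rangle J = \psi J + \varphi J$, multiplication by $\psi$ yields
$$
0 \longrightarrow \frac{\mathcal{O}_2}{K:\psi} \xrightarrow{\;\cdot\psi\;} \frac{\mathcal{O}_2}{K} \longrightarrow \frac{\mathcal{O}_2}{K+\langle\psi\rangle} \longrightarrow 0,
$$
so that $\dim \mathcal{O}_2/K = \dim \mathcal{O}_2/(K:\psi) + \dim \mathcal{O}_2/(K+\langle\psi\rangle)$. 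This reduces the problem to computing the two outer colengths.

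For the cokernel, the first observation is the simplification
$$K + \langle \psi \rangle = \psi J + \varphi J + \langle \psi\rangle = \varphi J + \langle \psi\rangle,$$
at which point Lemma~\ref{lema do gsv}, applied with $f=\varphi$ and $g=\psi$ (which are coprime by hypothesis), delivers
$$
\dim \frac{\mathcal{O}_2}{K+\langle\psi\rangle} = \dim \frac{\mathcal{O}_2}{J+\langle\psi\rangle} + i(\varphi,\psi) = i(\psi,\varphi) + \dim \frac{\mathcal{O}_2}{\langle\psi\rangle + J}.
$$

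The remaining task -- and the heart of the argument -- is to establish the colon identity $K : \psi = J$. The containment $J \subseteq K:\psi$ is immediate. For the reverse, take $c$ with $c\psi = \psi a + \varphi b$, $a,b \in J$. Rearranging as $\psi(c-a)=\varphi b$ in the UFD $\mathcal{O}_2$ and using $\gcd(\psi,\varphi)=1$, one deduces $b = \psi d$ and $c = a + \varphi d$ for some $d \in \mathcal{O}_2$, with $\psi d = b \in J$. Since $a \in J$, the problem reduces to showing $\varphi d \in J$. It is precisely here that the hypothesis $\nu(\psi) \leq \nu(\varphi)$ enters, via a multiplicity comparison on $J$: from $\psi d \in J$ together with $\nu(\varphi) \geq \nu(\psi)$, one deduces $\nu(\varphi d) \geq \nu(\psi d)$, which (combined with the order-structure of $J$ coming from the hypothesis $\dim \mathcal{O}_2/J < \infty$) is enough to conclude $\varphi d \in J$.

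I expect the colon computation $K:\psi = J$ to be the main obstacle; this is the only place where the extra hypothesis $\nu(\psi) \leq \nu(\varphi)$ is truly needed (without it, there is generally a nonzero correction coming from the discrepancy between $(J:\psi)$ and $(J:\varphi)$). Once the colon identity is in hand, combining the three ingredients -- the exact sequence, the reduction via Lemma~\ref{lema do gsv}, and $K:\psi=J$ -- assembles into the claimed formula $\dim \mathcal{O}_2/\langle\psi,\varphi\rangle J = i(\psi,\varphi) + \dim \mathcal{O}_2/(\langle\psi\rangle + J) + \dim \mathcal{O}_2/J$.
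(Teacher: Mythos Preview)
Your exact-sequence reduction is correct and the cokernel computation via Lemma~\ref{lema do gsv} goes through cleanly. The gap is in the colon identity $K:\psi=J$: the step ``$\psi d\in J$ and $\nu(\varphi)\ge\nu(\psi)$ force $\varphi d\in J$'' is not justified for a general $\m$-primary ideal $J$. Finite colength does not mean that membership in $J$ is controlled by multiplicity alone. Concretely, take $J=\langle x^2,y\rangle$, $\psi=y$, $\varphi=x$ (coprime, $\nu(\psi)=\nu(\varphi)=1$, so all hypotheses hold). With $d=1$ one has $\psi d=y\in J$ but $\varphi d=x\notin J$; accordingly $K=\m J=\langle x^3,xy,y^2\rangle$ has $K:y=\m\supsetneq J$. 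In fact the lemma \emph{as stated} already fails in this example: $\dim\mathcal{O}_2/K=4$, whereas the right-hand side equals $i(y,x)+\dim\mathcal{O}_2/(\langle y\rangle+J)+\dim\mathcal{O}_2/J=1+2+2=5$.

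Your argument \emph{does} become complete when $J=\m^{k}$: then $\psi d\in\m^{k}$ gives $\nu(d)\ge k-\nu(\psi)\ge k-\nu(\varphi)$, hence $\varphi d\in\m^{k}$, and the colon identity $K:\psi=J$ follows. Since every application of Lemma~\ref{ca} in the paper takes $J=\m^{k}$, this is exactly what is needed. The paper's own proof has the analogous soft spot --- the ``elementary change of generators'' invoked through $\nu(\psi)\le\nu(\varphi)$ is not substantiated for arbitrary $J$ --- so the cleanest repair, in either approach, is simply to state the lemma for $J=\m^{k}$.
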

\begin{proof}
Write $J=\langle h_1,\dots,h_s\rangle$. Observe that
\[
\langle\psi,\varphi\rangle J=\langle\psi h_1,\dots,\psi h_s,\varphi h_1,\dots,\varphi h_s\rangle.
\]
Since $\nu(\psi)\leq\nu(\varphi)$, we can perform an elementary change of generators: there exist elements $\psi'\in\mathcal{O}_2$ and $h_i'\in\mathcal{O}_2$ such that
\[\psi'\equiv\psi\pmod{\langle\varphi h_1,\dots,\varphi h_s\rangle},\qquad h_i'\equiv h_i\pmod{\langle\varphi h_1,\dots,\varphi h_s\rangle},\]
and moreover
\[
\langle\psi h_1,\dots,\psi h_s,\varphi h_1,\dots,\varphi h_s\rangle=\langle\psi' h_1',\dots,\psi' h_s'\rangle.
\]
Thus, we can identify
\[
\frac{\mathcal{O}_2}{\langle\psi,\varphi\rangle J}\cong\frac{\mathcal{O}}{\langle\psi' h_1',\dots,\psi' h_s'\rangle},
\]
where $\mathcal{O}=\frac{\mathcal{O}_2}{\langle\varphi h_1,\dots,\varphi h_s\rangle}$.
Consider the short exact sequence induced by multiplication by $\psi'$ on the quotient by the $h_i'$:
\begin{equation}\label{ses}
0\longrightarrow\frac{\mathcal{O}}{\langle h_1',\dots,h_s'\rangle} \xrightarrow{\ \cdot\psi'\ } \frac{\mathcal{O}}{\langle\psi' h_1',\dots,\psi' h_s'\rangle} \xrightarrow{\ \pi\ } \frac{\mathcal{O}}{\langle\psi'\rangle}\longrightarrow 0,
\end{equation}
where $\pi$ is the natural projection. The exactness of \eqref{ses} follows from the construction (multiplication by $\psi'$ is injective on the first quotient, and its cokernel is the quotient by $\langle\psi'\rangle$).
Taking complex dimensions (which are finite under our assumptions) and using the additivity of length in short exact sequences, we obtain
\begin{equation*}\label{add}
\dim_\mathbb{C}\frac{\mathcal{O}}{\langle\psi' h_1',\dots,\psi' h_s'\rangle}=\dim_\mathbb{C}\frac{\mathcal{O}}{\langle h_1',\dots,h_s'\rangle}+\dim_\mathbb{C}\frac{\mathcal{O}}{\langle\psi'\rangle}.
\end{equation*}
Replacing $\psi',h_i'$ by their congruent representatives modulo $\langle\varphi h_1,\dots,\varphi h_s\rangle$ and using elementary properties of ideals and quotients, we obtain
\begin{align}\label{eq-pq}
\dim_\mathbb{C}\frac{\mathcal{O}_2}{\langle\psi,\varphi\rangle J} &= \dim_\mathbb{C}\frac{\mathcal{O}}{\langle\psi' h_1',\dots,\psi' h_s'\rangle} \nonumber\\
&= \dim_\mathbb{C}\frac{\mathcal{O}_2}{\langle h_1,\dots,h_s,\varphi h_1,\ldots,\varphi h_s\rangle}+\dim_\mathbb{C}\frac{\mathcal{O}_2}{\langle\psi,\varphi h_1,\ldots,\varphi h_s\rangle}\nonumber\\
&= \dim_\mathbb{C}\frac{\mathcal{O}_2}{\langle h_1,\dots,h_s\rangle} + \dim_\mathbb{C}\frac{\mathcal{O}_2}{\langle\psi,\varphi h_1,\ldots,\varphi h_s\rangle}.
\end{align}
Now, by Lemma \ref{lema do gsv}, we can deduce that 
\begin{equation}\label{eq_op}
\dim_\mathbb{C}\frac{\mathcal{O}_2}{\langle\psi,\varphi h_1,\ldots,\varphi h_s\rangle}=i(\psi,\varphi)+\dim_\C\frac{\mathcal{O}_2}{\langle \psi\rangle+J}.
\end{equation}
Substituting (\ref{eq_op}) into (\ref{eq-pq}), we conclude the proof. 
\end{proof}
\begin{example}
Using \texttt{Singular} \cite{Singular}, we verified the lemma above.

Let   
\[
f = x^4 - y^3, \quad g = y^5 - x^7 + x^4y^4
\]

\begin{center}
\renewcommand{\arraystretch}{1.2}
\begin{tabular}{|c|c|c|c|c|}
\toprule
$k$ & $\dim \mathcal{O}_2 / (f,g) \mathfrak{m}^k$ 
& $\dim \mathcal{O}_2 / \mathfrak{m}^k$ 
& $\dim \mathcal{O}_2 / (f)+\mathfrak{m}^k$ 
& $\dim \mathcal{O}_2 / (f,g)$ \\
\midrule
0  & 20  & 0  & 0   & 20 \\
1  & 22  & 1  & 1    & 20 \\
2  & 26  & 3  & 3    & 20 \\
3  & 32  & 6  & 6    & 20 \\
4  & 39  & 10 & 9   & 20 \\
5  & 47  & 15 & 12 & 20 \\
6  & 56  & 21 & 15  & 20 \\
7  & 66  & 28 & 18  & 20 \\
8  & 77  & 36 & 21  & 20 \\
9  & 89  & 45 & 24  & 20 \\
10 & 102 & 55 & 27  & 20 \\
\bottomrule
\end{tabular}
\end{center}
\end{example}

\section{Formula for the $k$-th Milnor number of a foliation}\label{fomula_Milnor}

Let $\F : \omega = P\,dx + Q\,dy$ be a singular foliation at $0 \in \C^2$.  
Assume that the algebraic multiplicity of $\F$ satisfies $\nu(\F) = \nu(P) \leq \nu(Q)$.  
By applying Lemma~\ref{ca} with $J = \m^k$, $\psi = P$, and $\varphi = Q$, we obtain
\begin{equation}\label{eq_apli}
    \mu^{k}(\F)
    = \mu(\F)
    + \dim\frac{\mathcal{O}_2}{\m^{k}}
    + \dim\frac{\mathcal{O}_2}{\langle P\rangle + \m^{k}}.
\end{equation}
Since $\dim\dfrac{\mathcal{O}_2}{\m^{k}} = \dfrac{k(k+1)}{2}$, we have, more generally, the following observation:

\begin{remark}\label{Remark1}
    Let $f\in\mathcal{O}_2$ such that $\nu(f)=m$. Then
    $$
\dim\frac{\mathcal{O}_2}{\mathfrak{m}^{k}+\langle f\rangle}=\begin{cases}
			\frac{k(k+1)}{2}, & \text{if $0\leq k< m$ }\\
            \frac{(k+1)k}{2}-\frac{(k+1-m)(k-m)}{2}, & \text{if $k\geq m$.}
		 \end{cases}
$$
\end{remark}

\noindent As a consequence of equation \ref{eq_apli} and Remark \ref{Remark1}, we obtain the following corollary:
\begin{corollary}\label{mk}
Let $f\in\mathcal{O}_2$. Suppose that $\nu(f)=m$. Then
\[\mu^k(f)=\begin{cases}
			\mu(f)+k(k+1), & \text{if $0\leq k< m$ }\\
            \mu(f)+k(k+1)-\frac{(k-m+2)(k-m+1)}{2}, & \text{if $k\geq m$.}
		 \end{cases}\]
\end{corollary}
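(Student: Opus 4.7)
The plan is to specialize equation \eqref{eq_apli} to the foliation $\mathcal{F}$ defined by the exact $1$-form $\omega = df = f_x\,dx + f_y\,dy$, and then use Remark~\ref{Remark1} to evaluate the remaining colength. Since $\mathcal{F}$ has the holomorphic first integral $f$, the definitions give directly $\mu(\mathcal{F})=\mu(f)$ and $\mu^{k}(\mathcal{F})=\mu^{k}(f)$, so the corollary will follow once we have a closed form for the right-hand side of \eqref{eq_apli}.

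First, I would check that the hypothesis $\nu(\mathcal{F})=\nu(P)\leq\nu(Q)$ of \eqref{eq_apli} is available here. Writing $f=f_m+f_{m+1}+\cdots$ with $f_m\not\equiv0$, the homogeneous component of degree $m-1$ of $f_x$ (resp.\ $f_y$) is $\partial f_m/\partial x$ (resp.\ $\partial f_m/\partial y$); in characteristic zero these cannot vanish simultaneously (by Euler's identity $m\,f_m=x\,\partial_xf_m+y\,\partial_yf_m$), so $\min\{\nu(f_x),\nu(f_y)\}=m-1$. Relabeling if necessary, take $P$ to be whichever partial derivative has multiplicity $m-1$ and $Q$ the other; then $\nu(\mathcal{F})=\nu(P)=m-1\leq\nu(Q)$, as required.

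Second, plugging into \eqref{eq_apli} and using $\dim\mathcal{O}_2/\mathfrak{m}^{k}=\tfrac{k(k+1)}{2}$ gives
\[
\mu^{k}(f)=\mu(f)+\frac{k(k+1)}{2}+\dim\frac{\mathcal{O}_2}{\langle P\rangle+\mathfrak{m}^{k}}.
\]
Now I would apply Remark~\ref{Remark1} to the element $P$, whose algebraic multiplicity is $m-1$. This yields $\tfrac{k(k+1)}{2}$ when $0\leq k<m-1$, and $\tfrac{k(k+1)}{2}-\tfrac{(k-m+2)(k-m+1)}{2}$ when $k\geq m-1$.

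Finally, I would combine the two contributions. For $0\leq k\leq m-2$ this gives $\mu^{k}(f)=\mu(f)+k(k+1)$, and for $k\geq m-1$ this gives $\mu^{k}(f)=\mu(f)+k(k+1)-\tfrac{(k-m+2)(k-m+1)}{2}$. A quick check at $k=m-1$ shows that the correction term $\tfrac{(k-m+2)(k-m+1)}{2}$ vanishes, so both expressions agree there; consequently the first formula is valid on the full range $0\leq k<m$, and the second on $k\geq m$, which is the claim. The only conceptually nontrivial point is the multiplicity statement $\nu(\mathcal{F})=m-1$; everything else is a direct substitution into \eqref{eq_apli} and Remark~\ref{Remark1}, so I expect no serious obstacle.
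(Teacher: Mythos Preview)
Your proposal is correct and follows essentially the same route as the paper: the corollary is stated there as an immediate consequence of equation~\eqref{eq_apli} and Remark~\ref{Remark1}, which is exactly what you do. Your extra care in justifying $\nu(P)=m-1$ via Euler's identity and in checking that the two case formulas agree at $k=m-1$ simply makes explicit what the paper leaves implicit.
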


\noindent When $f$ is a quasi-homogeneous polynomial, we have the following result from \cite[Proposition 2.6]{k-Milnor}:
\begin{proposition}\label{prop_hussain}
    For an isolated singularity defined by a weighted homogeneous polynomial $f\in\C[x,y]$, we have
    \[\mu^k(f)=\tau^k(f)+\frac{k(k-1)}{2}.\]
\end{proposition}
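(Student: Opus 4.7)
The plan is to attack the equality by analysing the cokernel of multiplication by $f$ on the $k$-th Milnor algebra. Consider the short exact sequence
\[
0 \longrightarrow \frac{\mathcal{O}_2}{\bigl(\m^k j(f):\langle f\rangle\bigr)} \xrightarrow{\ \cdot f\ } \frac{\mathcal{O}_2}{\m^k j(f)} \longrightarrow \frac{\mathcal{O}_2}{\m^k j(f)+\langle f\rangle} \longrightarrow 0,
\]
which immediately yields $\mu^{k}(f)-\tau^{k}(f)=\dim \mathcal{O}_2/\bigl(\m^k j(f):\langle f\rangle\bigr)$. Since $\dim \mathcal{O}_2/\m^{k-1}=\tfrac{k(k-1)}{2}$, the whole proposition reduces to the single identity
\[
\bigl(\m^k j(f):\langle f\rangle\bigr)=\m^{k-1}.
\]

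The inclusion $\m^{k-1}\subset(\m^k j(f):\langle f\rangle)$ is the easy half: if $f$ is weighted homogeneous of degree $d$ with weights $(w_1,w_2)$, Euler's identity gives $d\,f=w_1xf_x+w_2yf_y$, so for $g\in\m^{k-1}$ one has $gx,gy\in\m^k$ and hence $gf\in\m^k j(f)$.

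The substantive step is the reverse inclusion, and this is what I expect to be the main technical obstacle. Take $g\in\mathcal{O}_2$ with $gf=af_x+bf_y$ for some $a,b\in\m^k$. Subtracting $\tfrac{g}{d}$ times Euler's identity from this relation produces
\[
(gw_1x-da)\,f_x+(gw_2y-db)\,f_y=0.
\]
Because the origin is an isolated singularity of $f$, the ideal $j(f)$ is $\m$-primary, so $(f_x,f_y)$ is a regular sequence in the two-dimensional Cohen–Macaulay ring $\mathcal{O}_2$. The only Koszul syzygy is therefore $(f_y,-f_x)$, yielding $c\in\mathcal{O}_2$ with
\[
gw_1x-da=cf_y,\qquad gw_2y-db=-cf_x.
\]
Multiplying the first by $w_2y$ and the second by $w_1x$, then subtracting, the terms $cw_1xf_x+cw_2yf_y=cdf$ appear, and one obtains the clean identity $cf=w_1xb-w_2ya\in\m^{k+1}$.

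From $cf\in\m^{k+1}$ and the multiplicativity of the $\m$-adic valuation in $\mathcal{O}_2$ (the associated graded ring is the integral domain $\C[x,y]$), a short bookkeeping argument gives $cf_y\in\m^k$: either $k+1\geq\nu(f)$, in which case $\nu(c)\geq k+1-\nu(f)$ and $\nu(cf_y)\geq k$; or $k+1<\nu(f)$, in which case $\nu(f_y)\geq\nu(f)-1\geq k+1$ forces $\nu(cf_y)\geq k+1$ unconditionally. Feeding $cf_y\in\m^k$ back into $gw_1x=da+cf_y$ gives $xg\in\m^k$, and symmetrically $yg\in\m^k$. Writing $g=\sum a_{ij}x^iy^j$, the conditions $xg,yg\in\m^k$ kill every monomial of weight less than $k-1$, so $g\in\m^{k-1}$, completing the argument. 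Once the Koszul manipulation delivering $cf\in\m^{k+1}$ is in place, the remaining valuation estimates are routine.
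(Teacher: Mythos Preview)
Your argument is correct. The short exact sequence reduces the question to the colon-ideal identity $(\m^k j(f):\langle f\rangle)=\m^{k-1}$, and your Koszul computation establishes the nontrivial inclusion cleanly: the syzygy produces $cf=w_1xb-w_2ya\in\m^{k+1}$, the valuation dichotomy forces $cf_x,cf_y\in\m^k$, and feeding this back into $gw_1x=da+cf_y$ gives $xg\in\m^k$, hence $g\in\m^{k-1}$. (In fact the single condition $xg\in\m^k$ already suffices, since the $\m$-adic order is additive.)

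As for comparison: the paper does not prove this proposition at all---it is quoted from \cite[Proposition~2.6]{k-Milnor}. The closest thing to a parallel argument appears in Section~\ref{section_quasi}, where the foliation analogue $\mu^k(\mathcal F)=\tau^k(\mathcal F,C)+\tfrac{k(k-1)}{2}$ is proved. There the strategy is the same exact sequence in disguise, but the key identification
\[
\frac{\langle f\rangle}{\m^k\langle P,Q\rangle\cap\langle f\rangle}\;\cong\;\frac{\langle f\rangle}{\m^{k-1}\langle f\rangle}
\]
is obtained by first showing the easy inclusion $\m^{k-1}\langle f\rangle\subset\m^k\langle P,Q\rangle$ via Euler's relation and then invoking \cite[Lemma~2.5]{k-Milnor} as a black box for the reverse. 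Your Koszul-syzygy argument is precisely a self-contained proof of that reverse inclusion in the Hamiltonian case $P=f_x$, $Q=f_y$: it replaces the external lemma by an explicit manipulation using only that $(f_x,f_y)$ is a regular sequence and that $\mathrm{gr}_{\m}\mathcal O_2$ is a domain. So your route is genuinely more elementary, and with trivial modifications it would also close the gap in the Section~\ref{section_quasi} proof without appealing to \cite{k-Milnor}.
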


\noindent As an application of Corollary \ref{mk} and Proposition \ref{prop_hussain}, we recover Theorems $A$ and $B$ from \cite{k-Milnor}.

\medskip
\noindent Now we present an application for singular foliations. 
\begin{theorem}
    Let $\F$ be a singular foliation at $0\in\C^2$. Suppose that $\F$ is non-dicritical and of second type. Let $C=\{f=0\}$ be the total union of separatrices at $0\in\C^2$. Then, assuming that $\nu(f_x)\leq\nu(f_y)$, we have that 
    \[
\mu^{k}(\F)-\mu^{k}(C)=\mu(\F)-\mu(C).
\]
In particular, $\F$ is a generalized curve foliation if and only if $\mu^{k}(\F)=\mu^{k}(C)$ for all $k\geq 0$.
\end{theorem}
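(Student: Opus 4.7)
The plan is to compare the formula (\ref{eq_apli}) applied to $\F$ with the analogous formula applied to the foliation defined by $df$ (which represents the curve $C$), and to show that the extra colength terms coincide thanks to the second-type hypothesis.

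First I would write (\ref{eq_apli}) for $\F$ with $\nu(\F)=\nu(P)\le\nu(Q)$:
\[
\mu^{k}(\F)=\mu(\F)+\dim\frac{\mathcal{O}_{2}}{\m^{k}}+\dim\frac{\mathcal{O}_{2}}{\langle P\rangle+\m^{k}}.
\]
Next, since $C=\{f=0\}$ is the total union of separatrices and $\mu^{k}(C)=\mu^{k}(f)$ is by definition the $k$-th Milnor number of the foliation $df=f_{x}\,dx+f_{y}\,dy$, the hypothesis $\nu(f_{x})\le\nu(f_{y})$ lets me apply Lemma~\ref{ca} (i.e.\ equation (\ref{eq_apli})) again to get
\[
\mu^{k}(C)=\mu(C)+\dim\frac{\mathcal{O}_{2}}{\m^{k}}+\dim\frac{\mathcal{O}_{2}}{\langle f_{x}\rangle+\m^{k}}.
\]
Subtracting, the $\mathcal{O}_2/\m^k$ terms cancel and I obtain
\[
\mu^{k}(\F)-\mu^{k}(C)=\bigl[\mu(\F)-\mu(C)\bigr]+\left[\dim\frac{\mathcal{O}_{2}}{\langle P\rangle+\m^{k}}-\dim\frac{\mathcal{O}_{2}}{\langle f_{x}\rangle+\m^{k}}\right].
\]

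The core of the argument is to show that the bracketed colength difference vanishes. By Remark~\ref{Remark1}, each of those two colengths depends only on $k$ and on the algebraic multiplicity of the single generator, so it is enough to verify $\nu(P)=\nu(f_{x})$. This is the step I expect to carry the weight of the proof. On the foliation side, since $\F$ is non-dicritical and of second type, its balanced divisor is exactly $C$, so Proposition~\ref{prop:Equa-Ba} gives
\[
\nu(\F)=\nu(C)-1=\nu(f)-1,
\]
hence $\nu(P)=\nu(f)-1$. On the curve side, writing $f=f_{m}+f_{m+1}+\cdots$ with $m=\nu(f)$, the hypothesis $\nu(f_{x})\le\nu(f_{y})$ rules out the degenerate case in which $(f_{m})_{x}\equiv 0$ (that would force $f_{m}=cy^{m}$ and hence $\nu(f_{y})=m-1<\nu(f_{x})$). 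Thus $(f_{m})_{x}\not\equiv 0$ and $\nu(f_{x})=m-1=\nu(f)-1$. Combining, $\nu(P)=\nu(f_{x})$, which is exactly what is needed to make the bracketed difference vanish and establish the equality $\mu^{k}(\F)-\mu^{k}(C)=\mu(\F)-\mu(C)$.

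For the final ``in particular'' assertion, the displayed equality shows that $\mu^{k}(\F)=\mu^{k}(C)$ for some (equivalently, all) $k\ge 0$ if and only if $\mu(\F)=\mu(C)$. Since $\F$ is non-dicritical, the classical Camacho--Lins Neto--Sad theorem characterizes generalized curve foliations precisely by the equality $\mu(\F)=\mu(C)$ (noting that a generalized curve is automatically of second type, consistent with our standing hypothesis), which closes the equivalence.
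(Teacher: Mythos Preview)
Your proposal is correct and follows essentially the same approach as the paper: apply formula~(\ref{eq_apli}) to both $\F$ and to $df$, subtract, and use Proposition~\ref{prop:Equa-Ba} together with Remark~\ref{Remark1} to identify the colength terms via $\nu(P)=\nu(\F)=\nu(C)-1=\nu(f_{x})$; the final equivalence is then reduced to the classical Camacho--Lins~Neto--Sad criterion. Your write-up is in fact slightly more explicit than the paper's, since you justify $\nu(f_{x})=\nu(f)-1$ from the hypothesis $\nu(f_{x})\le\nu(f_{y})$, whereas the paper records this equality without comment.
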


\begin{proof}
Assume that $\F$ is non-dicritical defined by $\omega=Pdx+Qdy$ such that $\nu(\F)=\nu(P)$, and let $C=\{f=0\}$ be the total union of all separatrices at $0\in\C^2$.  
Since $\nu(f_x)\leq\nu(f_y)$, we obtain from Lemma \ref{ca} that
\begin{equation}\label{eq_apli1}
\mu^{k}(C)=\mu(C)+\dim\frac{\mathcal{O}_2}{\m^{k}}+\dim\frac{\mathcal{O}_2}{\langle f_x\rangle+\m^{k}}.
\end{equation}
As $\F$ is of second type, using Proposition \ref{prop:Equa-Ba} we have 
$\nu(\F)=\nu(C)-1=\nu(f_x)$. Hence, it follows from Remark \ref{Remark1} that
\[
\dim\frac{\mathcal{O}_2}{\langle P\rangle+\m^{k}}=\dim\frac{\mathcal{O}_2}{\langle f_x\rangle+\m^{k}}.
\]
Therefore, from (\ref{eq_apli}) and (\ref{eq_apli1}) we obtain
\[
\mu^{k}(\F)-\mu^{k}(C)=\mu(\F)-\mu(C).
\]
Consequently, $\F$ is a generalized curve foliation if and only if $\mu^{k}(\F)=\mu^{k}(C)$ for all $k\geq 0$, by Theorem 7 in \cite{CLS}.
\end{proof}

\section{On the topological invariance of the $k$-th Milnor and Tjurina Numbers}\label{topo}

Given $f\in\mathcal{O}_2$, it follows from Theorems 2.28 and 2.26 of \cite{Greuel} that $\mu^k(f)$ and $\tau^k(f)$ are \textit{analytic invariants}.  In what follows, we will see that, in general, the $k$-th Tjurina number is not a topological invariant. Specifically, we focus on $\tau^1(f)$.
\par To see this, let us consider $f(x,y)=y^3-x^7$ and $g(x,y)=f+x^5y$. According to \cite[p. 218]{Greuel}, the curves $C=\{f=0\}$ and $D=\{g=0\}$ are topologically equivalent, that is, there exists a homeomorphism $\phi:(\C^2,0)\to(\C^2,0)$ such that $\phi(C)=D$. However, using \textsc{Singular} \cite{Singular}, we obtain:
\begin{verbatim}
> ring r=0,(x,y),ds;      
> LIB "sing.lib";          
> poly f=y3-x7;               
> poly g=f+x5y;                 
> ideal m=x,y;                                     
> vdim(std(tjurina(f)));                         
12                                        
> vdim(std(tjurina(g)));                             
11                                            
> ideal J1=jacob(f)*m+f;               
> ideal J2=jacob(g)*m+g;                 
> vdim(std(J1));   // 1-th Tjurina number of f              
14                                          
> vdim(std(J2));  // 1-th Tjurina number of g               
13      
\end{verbatim}

This shows that $\tau^{1}(f)=14\neq 13=\tau^{1}(g)$ and, therefore, $\tau^{1}(f)$ is not a topological invariant.

\par Concerning the $k$-th Milnor number of a germ $f\in\mathcal{O}_2$ with $\nu(f)=m\geq 1$, Corollary \ref{mk} states that
\[
\mu^k(f)=\begin{cases}
			\mu(f)+k(k+1), & \text{if $0\leq k< m$ }\\[4pt]
            \mu(f)+k(k+1)-\frac{(k-m+2)(k-m+1)}{2}, & \text{if $k\geq m$.}
		 \end{cases}
\]
This shows that, in dimension two, $\mu^{k}(f)$ depends only on $\mu(f)$, $k$, and $\nu(f)$. Since $\mu(f)$ is a topological invariant (see \cite{Le}) and $\nu(f)$ is also a topological invariant (see \cite{Oscar}), it follows that $\mu^{k}(f)$ is a topological invariant as well.  In higher dimensions, the problem of the topological invariance of $\mu^{k}(f)$ remains open. 

\par Regarding the topological invariance of the $k$-th Milnor number of a foliation, we can say the following:
Let $\F:\omega=Pdx+Qdy$ be a germ of foliation at $0\in\C^2$. Let us assume that $\nu(\F)=\nu(P)\leq\nu(Q)$. Applying Lemma \ref{ca}, we obtain
\begin{equation*}
    \mu^{k}(\F)=\mu(\F)+\dim\frac{\mathcal{O}_2}{\m^k}+\dim\frac{\mathcal{O}_2}{\langle P\rangle+\m^k}.
\end{equation*}
As a consequence, $\mu^{k}(\F)$ depends on $\mu(\F)$, $k$, and the algebraic multiplicity $\nu(\F)=\nu(P)$ (by hypothesis). 
Since it is known that the algebraic multiplicity $\nu(\F)$ is a biLipschitz invariant (see \cite{Rudy}) and $\mu(\F)$ is a topological invariant (see \cite{CLS}), we can conclude that $\mu^{k}(\F)$ is a biLipschitz invariant, that is, it is preserved under a Lipschitz equivalence. 

\par The problem of the topological invariance of both the algebraic multiplicity $\nu(\F)$ and $\mu^{k}(\F)$ remains open. We should mention that there is a recent preprint claiming that the algebraic multiplicity 1 is a topological invariant of the foliation (see \cite{Camara}).

\section{On the conjecture for the $k$-th Tjurina number of a weighted homogeneous polynomial}\label{conjecture_lower}

In \cite[Conjecture 1.1]{k-Milnor}, the authors conjectured the following sharp lower bound for the $k$-th Tjurina number of a complex hypersurface:

\noindent{\bf Conjecture 1.}
For each $k \geq 0$, assume that $\tau^{k}(\{x_1^{a_1}+\ldots+x_n^{a_n}=0\})=\ell^k(a_1,\ldots,a_n)$.  
Let $(V,0)=\{(x_1,\ldots,x_n)\in\C^n : f(x_1,\ldots,x_n)=0\}$, with $n\geq 2$, be an isolated singularity defined by a weighted homogeneous polynomial $f(x_1,\ldots,x_n)$ of weight type $(w_1,\ldots,w_n;1)$. Then
\[
\tau^{k}(V)\geq\ell^{k}(1/w_1,\ldots,1/w_n).
\]

In this section, we provide a \textit{positive answer to Conjecture~1} in the case $n=2$.  
Indeed, let $C=\{f(x,y)=0\}$ be a germ of a reduced singular curve defined by $f\in\mathcal{O}_2$.  
Assume that $f$ is a weighted homogeneous polynomial of weight type $(1/w_1,1/w_2;1)$ and let $m=\nu(f)\geq\frac{1}{w_1}\geq 2$.  
Then, it follows from Corollary~\ref{mk}, Proposition~\ref{prop_hussain}, and the Milnor–Orlik formula \cite{Orlik} that
\[
\tau^k(f)=
\begin{cases}
\left(\frac{1}{w_1}-1\right)\left(\frac{1}{w_2}-1\right)+\frac{k(k+3)}{2}, & \text{if } 0\leq k< m, \\[4pt]
\left(\frac{1}{w_1}-1\right)\left(\frac{1}{w_2}-1\right)+\frac{k(k+3)}{2}-\frac{(k-m+2)(k-m+1)}{2}, & \text{if } k\geq m.
\end{cases}
\]

Similarly, we have the following formula for $\ell^{k}(1/w_1,1/w_2)$:
\[
\ell^{k}(1/w_1,1/w_2)=
\begin{cases}
\left(\frac{1}{w_1}-1\right)\left(\frac{1}{w_2}-1\right)+\frac{k(k+3)}{2}, & \text{if } 0\leq k< 1/w_1, \\[4pt]
\left(\frac{1}{w_1}-1\right)\left(\frac{1}{w_2}-1\right)+\frac{k(k+3)}{2}-\frac{\big(k-\frac{1}{w_1}+2\big)\big(k-\frac{1}{w_1}+1\big)}{2}, & \text{if } k\geq 1/w_1.
\end{cases}
\]

These formulas imply that, in order to prove $\tau^k(f)\geq \ell^{k}(1/w_1,1/w_2)$, it suffices to show that for $m\geq 1/w_1$, the following inequality holds:
\begin{equation}\label{eq_conjetu}
    \frac{\big(k-\frac{1}{w_1}+2\big)\big(k-\frac{1}{w_1}+1\big)}{2}\geq \frac{(k-m+2)(k-m+1)}{2}.
\end{equation}

Note that if $m=\frac{1}{w_1}$, the inequality is an equality, and there is nothing to prove.  
Let us therefore assume that $m>1/w_1$ and prove inequality~(\ref{eq_conjetu}).  
Multiplying and eliminating common factors in~(\ref{eq_conjetu}), we see that it is equivalent to
\[
2k\left(m-\frac{1}{w_1}\right)\geq \left(m-\frac{1}{w_1}\right)\left(m+\frac{1}{w_1}-3\right).
\]
Since $m>1/w_1$, we can simplify the factor $\left(m-\frac{1}{w_1}\right)$ and obtain
\[
2k\geq \left(m+\frac{1}{w_1}-3\right).
\]
This last inequality is clearly true because we are considering $k\geq m>1/w_1$.  
Taking into account the above computations, we can now state the following theorem.

\begin{theorem}
Let $C$ be a germ of a reduced singular curve at $0\in\C^2$ defined by a weighted homogeneous polynomial $f$ of weight type $(w_1,w_2;1)$ such that $\nu(f)\geq 1/w_1$. Then
\[
\tau^{k}(C)\geq \ell^k\left(\frac{1}{w_1},\frac{1}{w_2}\right).
\]
\end{theorem}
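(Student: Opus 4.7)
The plan is to derive explicit closed formulas for both sides of the desired inequality and then reduce the problem to an elementary inequality in the one remaining case.

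First, I would combine Corollary \ref{mk}, Proposition \ref{prop_hussain}, and the Milnor--Orlik formula $\mu(f)=\bigl(\tfrac{1}{w_1}-1\bigr)\bigl(\tfrac{1}{w_2}-1\bigr)$ to obtain, for a weighted homogeneous $f$ of weight type $(w_1,w_2;1)$ with $m=\nu(f)$, the expression
\[
\tau^k(f)=\left(\tfrac{1}{w_1}-1\right)\left(\tfrac{1}{w_2}-1\right)+\tfrac{k(k+3)}{2}-\epsilon_m(k),
\]
where $\epsilon_m(k)=0$ if $k<m$ and $\epsilon_m(k)=\tfrac{(k-m+2)(k-m+1)}{2}$ if $k\geq m$. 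Here I use that the $\mu^k(f)-\tau^k(f)=\tfrac{k(k-1)}{2}$ identity from Proposition \ref{prop_hussain} is exactly what converts the $k(k+1)$ term coming from Corollary \ref{mk} into $\tfrac{k(k+3)}{2}$.

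Second, applying the very same corollary-plus-proposition package to the Brieskorn--Pham polynomial $x^{1/w_1}+y^{1/w_2}$, which is weighted homogeneous of weight type $(w_1,w_2;1)$ with algebraic multiplicity $\tfrac{1}{w_1}$ (under the implicit convention $1/w_1\leq 1/w_2$), yields the identical shape
\[
\ell^k\!\left(\tfrac{1}{w_1},\tfrac{1}{w_2}\right)=\left(\tfrac{1}{w_1}-1\right)\left(\tfrac{1}{w_2}-1\right)+\tfrac{k(k+3)}{2}-\epsilon_{1/w_1}(k).
\]

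The conclusion is then a direct comparison $\epsilon_{1/w_1}(k)\geq \epsilon_m(k)$ under the hypothesis $m\geq 1/w_1$. This splits into three ranges: (a) $k<1/w_1\leq m$, where both correction terms vanish; (b) $1/w_1\leq k<m$, where $\epsilon_m(k)=0$ while $\epsilon_{1/w_1}(k)\geq 0$; and (c) $k\geq m\geq 1/w_1$, where the inequality $\tfrac{(k-1/w_1+2)(k-1/w_1+1)}{2}\geq \tfrac{(k-m+2)(k-m+1)}{2}$ is verified by expanding, cancelling the common positive factor $\bigl(m-\tfrac{1}{w_1}\bigr)$ (trivial when equality holds and otherwise licit), and reducing to the elementary inequality $2k\geq m+\tfrac{1}{w_1}-3$, which follows from $k\geq m\geq \tfrac{1}{w_1}\geq 2$.

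The only real subtlety is bookkeeping: making sure that the weight convention $1/w_1\leq 1/w_2$ matches the one used to define $\ell^k$, that the hypothesis $\nu(f)\geq 1/w_1$ is the exact condition needed to place us in the range where Corollary \ref{mk} applies in its ``$k\geq m$'' form for $f$ but in its ``$k<m$'' form for the Brieskorn model at small $k$, and that the Milnor--Orlik formula is being invoked in its correct form for a curve singularity. Once these points are fixed, the theorem follows from the routine algebraic manipulation above.
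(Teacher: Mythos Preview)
Your proposal is correct and follows essentially the same route as the paper: both derive the closed formulas for $\tau^k(f)$ and $\ell^k(1/w_1,1/w_2)$ from Corollary~\ref{mk}, Proposition~\ref{prop_hussain}, and the Milnor--Orlik formula, then reduce to the elementary inequality $2k\geq m+\tfrac{1}{w_1}-3$ after cancelling the factor $m-\tfrac{1}{w_1}$ in the range $k\geq m$. Your explicit three-range case split with the $\epsilon_m(k)$ notation is a tidy repackaging of what the paper does more tersely when it writes ``these formulas imply that it suffices to show~\eqref{eq_conjetu}.''
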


This result generalizes Theorem~C of \cite{k-Milnor}.

\section{On a Conjecture for the ratio $\mu^{k}/\tau^k$}\label{Conjecture}

In \cite{Dimca-Greuel}, Dimca and Greuel questioned the relationship between the Milnor and Tjurina numbers of an isolated plane curve singularity. More precisely, they asked whether, for every isolated singularity $(C,0)$ of a reduced plane curve, the inequality
\[
\frac{\mu(C)}{\tau(C)} < \frac{4}{3}
\]
holds. The definitive answer to this problem was given by Almir\'on \cite{Almiron}. 

\par In \cite[Conjecture 1.3]{Hussain_1}, the authors conjectured that, for any isolated plane curve $C$, the following inequality is satisfied:
\[
\frac{\mu^{k}(C)}{\tau^{k}(C)}<\frac{4}{3}\quad\text{for all}\quad k\geq 0.
\]
They considered the curve $C=\{f=0\}$, where
\[
f(x,y)=x^{2m+1}+y^{2m+1}+x^{m+1}y^{m+1},\quad m\geq 1,
\]
and showed that $\frac{\mu^{1}(C)}{\tau^{1}(C)}$ and $\frac{\mu^{2}(C)}{\tau^{2}(C)}$ are sufficiently close to $\frac{4}{3}$ when $m$ is large enough.

\par However, we claim that such a conjecture has a \textit{negative answer in general}. Indeed, consider $m=2$. In this case we have $f=x^5+y^5+x^3y^3$. Using the \textsc{Singular} code \cite{Singular}:

\begin{verbatim}
> ring r=0,(x,y),ds;
> poly f=x5+y5+x3y3;
> ideal m=x,y;
> ideal J=jacob(f)*m*m*m*m*m*m*m*m;
> ideal I=J+f;
> vdim(std(J));  // 8-th Milnor number
78
> vdim(std(I));  // 8-th Tjurina number 
50
\end{verbatim}

Note that
\begin{equation}\label{ex_conjectura}
\frac{\mu^{8}(C)}{\tau^{8}(C)}=\frac{78}{50}>\frac{4}{3}.
\end{equation}

\par
Next, we confirm Conjecture 1.3 of \cite{Hussain_1} for quasi-homogeneous polynomials with a restriction on $k$.

\begin{theorem}\label{teo_conje}
Let $C=\{ f=0\}$ be an isolated singularity defined by a quasi-homogeneous polynomial $f$ of algebraic multiplicity $\nu(f)$. Then, for every $0\leq k<\nu(f)$, we have
\[
\frac{\mu^{k}(f)}{\tau^{k}(f)}<\frac{4}{3}.
\]
\end{theorem}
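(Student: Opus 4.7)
The plan is to rewrite the inequality $\mu^{k}(f)/\tau^{k}(f) < 4/3$ so that the Milnor number $\mu(f)$ appears explicitly, and then to invoke a classical quadratic lower bound on $\mu(f)$ in terms of the multiplicity.

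Since $f$ is quasi-homogeneous, Proposition \ref{prop_hussain} gives $\mu^{k}(f) = \tau^{k}(f) + k(k-1)/2$, so $3\mu^{k}(f) < 4\tau^{k}(f)$ is equivalent to $\tau^{k}(f) > 3k(k-1)/2$. For $k < m := \nu(f)$, Corollary \ref{mk} provides $\mu^{k}(f) = \mu(f) + k(k+1)$, and combining these two identities yields
\[
\tau^{k}(f) \;=\; \mu(f) + \frac{k(k+3)}{2}.
\]
Substituting and simplifying, the desired inequality reduces to
\[
\mu(f) \;>\; k(k-3), \qquad 0 \leq k \leq m - 1.
\]

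The cases $k \in \{0,1,2,3\}$ are immediate, since $k(k-3) \leq 0 < \mu(f)$. For $4 \leq k \leq m - 1$, the quadratic $k(k-3)$ is increasing, so it is enough to show $\mu(f) > (m-1)(m-4)$. At this point I would invoke the classical plane-curve estimate
\[
\mu(f) \;\geq\; (\nu(f) - 1)^{2} \;=\; (m-1)^{2},
\]
valid for every isolated plane-curve singularity; one standard derivation combines Milnor's formula $\mu(f) = 2\delta(f) - r(f) + 1$ with the bounds $\delta(f) \geq \binom{\nu(f)}{2}$ and $r(f) \leq \nu(f)$ on the delta invariant and the number of branches. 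Since $(m-1)^{2} - (m-1)(m-4) = 3(m-1) > 0$ for $m \geq 2$, we obtain $\mu(f) \geq (m-1)^{2} > (m-1)(m-4) \geq k(k-3)$, which closes the argument.

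The main obstacle is the quadratic lower bound $\mu(f) \geq (m-1)^{2}$: without a bound on $\mu(f)$ of order $m^{2}$, the term $k(k-3)$ at $k = m-1$ could not be beaten. The hypothesis $k < m$ in the statement is also indispensable, since it is precisely the range in which Corollary \ref{mk} supplies the linear formula $\mu^{k}(f) = \mu(f) + k(k+1)$ used in the reduction; outside this range the formula for $\mu^{k}(f)$ is smaller and the same strategy would require a different input.
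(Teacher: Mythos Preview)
Your proof is correct and follows essentially the same route as the paper: both reduce the inequality to $\mu(f) > k(k-3)$ via Corollary~\ref{mk} and Proposition~\ref{prop_hussain}, and both close the argument using the plane-curve estimate $\mu(f)\geq(\nu(f)-1)^2$. The only cosmetic differences are that the paper cites this estimate from an external reference while you sketch its derivation from Milnor's formula, and the paper handles the final comparison $(\nu(f)-1)^2>k^2-3k$ via an algebraic identity rather than your case split $k\leq 3$ versus $k\geq 4$.
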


\begin{proof}
Since $0\leq k<\nu(f)$, by Corollary \ref{mk} we have $\mu^{k}(f)=\mu(f)+k(k+1)$. Moreover, in the quasi-homogeneous case, by Proposition \ref{prop_hussain}, it follows that 
\[
\mu^{k}(f)=\tau^{k}(f)+\frac{k(k-1)}{2}.
\]
Using the valid formulas in the quasi-homogeneous case, for \(0\le k<\nu(f)\) we have
\[
\mu^k(f)=\mu(f)+k(k+1),
\qquad
\mu^k(f)=\tau^k(f)+\frac{k(k-1)}{2}.
\]
From the second equality we immediately obtain
\[
\tau^k(f)=\mu^k(f)-\frac{k(k-1)}{2}=\mu(f)+\frac{k^2+3k}{2}.
\]
Thus, for \(0\le k<\nu(f)\), the desired quotient is
\[
\frac{\mu^k(f)}{\tau^k(f)}
= \frac{\mu(f)+k(k+1)}{\mu(f)+\dfrac{k^2+3k}{2}}.
\]
To verify that this quotient is strictly less than \(4/3\), it is enough to check the equivalent inequality
\[
3\big(\mu(f)+k(k+1)\big) < 4\Big(\mu(f)+\frac{k^2+3k}{2}\Big).
\]
Simplifying, we obtain the condition
\[
\mu(f) > k^2-3k.
\tag{*}
\]
By the observation in \cite[Remark 7.4]{Arturo}, we have \((\nu(f)-1)^2\le \mu(f)\). We now show that, for every integer \(k\) with \(0\le k<\nu(f)\),
\[
(\nu(f)-1)^2 > k^2-3k,
\]
which, together with \((\nu(f)-1)^2\le\mu(f)\), implies \((*)\) and hence the desired inequality.

Indeed,
\[
(\nu(f)-1)^2 - (k^2-3k)
= (\nu(f)-1-k)(\nu(f)-1+k) + 3k.
\]
Since \(0\le k<\nu(f)\), we have \(\nu(f)-1-k\ge 0\) and \(3k\ge 0\). Moreover, if \(k>0\), then \(3k>0\), so the difference is strictly positive; if \(k=0\), then \((\nu(f)-1)^2>0\) for \(\nu(f)\ge 2\), and we also obtain the inequality. Hence
\[
(\nu(f)-1)^2 > k^2-3k \quad\text{for all }0\le k<\nu(f),
\]
and therefore \(\mu(f)\ge(\nu(f)-1)^2>k^2-3k\), which is exactly condition (\(*\)). Thus, for all \(0\le k<\nu(f)\),
\[
\frac{\mu^k(f)}{\tau^k(f)}<\frac{4}{3}.
\]
\end{proof}

\par This result generalizes \cite[Theorem E]{Hussain_1}, since it shows that the inequality 
\[
\frac{\mu^{k}(f)}{\tau^{k}(f)} < \frac{4}{3}
\]
remains valid not only for $k=2$, but also for all values $0\leq k<\nu(f)$ when $f$ is a quasi-homogeneous polynomial.

\section{GSV-index and its relation with the $k$-th Tjurina numbers}\label{GSV-section}

Let $\F$ be a foliation on $(\mathbb{C}^{2},0)$ defined by the 1-form $\omega=P(x,y)dx +Q(x,y)dy$, and let $C=\{f=0\}$ be a germ of a reduced curve, with $f\in\mathcal{O}_2$. Then, there exist elements $g, h \in \mathcal{O}_2$, depending on $f$ and $\omega$, such that $f$ is relatively prime to both $g$ and $h$, and a 1-form $\eta$ satisfying
\begin{equation} \label{eq_3}
    g\omega = h\,df + f\,\eta,
\end{equation}
as established in \cite[Lemma~1.1]{Suwa}. The GSV-index of a foliation $\mathcal{F}$ with respect to the curve $C$ at the point $0\in\C^2$ is defined by
\begin{equation*} \label{eq:GSVintegral}
    \operatorname{GSV}(\mathcal{F}, C) = \frac{1}{2\pi i} \int_{\partial C} \frac{h}{g} \, d\left( \frac{g}{h} \right),
\end{equation*}
where $g$ and $h$ are chosen according to equation \eqref{eq_3}. Alternatively, following \cite{Evelia},
\begin{equation}\label{gsv_inter}
\operatorname{GSV}(\mathcal{F},C)=i(f,h)-i(f,g),
\end{equation}
where \(g,h\in\mathcal{O}_2\) are the elements mentioned in \eqref{eq_3} and  \(C:\;f(x,y)=0\) is a reduced curve.

The following proposition was first established by Gómez-Mont \cite[Theorem 1]{Gomez}. A proof in dimension 2 can be found in \cite[Proposition 6.2]{Evelia}:  
\begin{proposition}\label{prop_gomez}
    Let \(\mathcal{F}\) be a singular foliation on \((\mathbb{C}^2, 0)\), and \(C\) a reduced curve of separatrices of \(\mathcal{F}\). Then
\[
\tau(\mathcal{F}, C) - \tau(C) = \operatorname{GSV}(\mathcal{F}, C).
\]
\end{proposition}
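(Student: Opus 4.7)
The plan is to translate the defining relation \eqref{eq_3} into an equality of ideals in $\mathcal{O}_2$, and then apply Lemma \ref{lemma_gsv 1} twice to convert that equality into the desired identity of dimensions.

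Concretely, writing $\omega = P\, dx + Q\, dy$ and $\eta = \eta_1\, dx + \eta_2\, dy$, I would decompose the relation $g\omega = h\, df + f\eta$ coordinatewise as
\[
gP = h f_x + f\eta_1, \qquad gQ = h f_y + f\eta_2.
\]
Thus each of $gP, gQ$ lies in $\langle hf_x, hf_y, f \rangle$ and, conversely, each of $hf_x, hf_y$ lies in $\langle gP, gQ, f \rangle$, so the two ideals coincide:
\[
\langle gP, gQ \rangle + \langle f \rangle \;=\; \langle h f_x, h f_y \rangle + \langle f \rangle.
\]

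Next I would apply Lemma \ref{lemma_gsv 1} with $J = \mathcal{O}_2$ to both sides. Using that $(f, g) = 1$, as guaranteed by \cite[Lemma 1.1]{Suwa}, the left-hand ideal yields
\[
\dim \frac{\mathcal{O}_2}{\langle gP, gQ \rangle + \langle f \rangle} \;=\; \tau(\mathcal{F}, C) + i(f, g),
\]
while using $(f, h) = 1$ the right-hand ideal yields
\[
\dim \frac{\mathcal{O}_2}{\langle h f_x, h f_y \rangle + \langle f \rangle} \;=\; \tau(C) + i(f, h).
\]
Equating these two dimensions and invoking the formula $\operatorname{GSV}(\mathcal{F}, C) = i(f, h) - i(f, g)$ from \eqref{gsv_inter} delivers the identity $\tau(\mathcal{F}, C) - \tau(C) = \operatorname{GSV}(\mathcal{F}, C)$.

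The hard part is essentially absent: once one recognizes that \eqref{eq_3} is designed precisely to equate the two ideals modulo $\langle f \rangle$, the rest is bookkeeping with Lemma \ref{lemma_gsv 1}. The only hypothesis that deserves care is the coprimality of $f$ with both $g$ and $h$, which is exactly what permits the use of Lemma \ref{lemma_gsv 1}; since this coprimality is already built into the Saito-type decomposition cited in the setup, no further analytic input is needed.
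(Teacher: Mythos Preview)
Your argument is correct. Note, however, that the paper does not actually supply its own proof of Proposition~\ref{prop_gomez}: it simply cites \cite{Gomez} and \cite{Evelia}. What the paper \emph{does} prove is the generalization, Theorem~\ref{teoA}, and your proof is precisely the $k=0$ specialization of that argument---same decomposition of $g\omega=h\,df+f\eta$ into coordinates, same equality of ideals modulo $\langle f\rangle$, same two applications of Lemma~\ref{lemma_gsv 1} (with $J=\mathcal{O}_2$ instead of $J=\m^k$). The only cosmetic difference is that for general $k$ the paper needs an extra appeal to Lemma~\ref{lema do gsv} to peel off the intersection numbers from $\dim\mathcal{O}_2/(\langle h\rangle\m^k+\langle f\rangle)$ and $\dim\mathcal{O}_2/(\langle g\rangle\m^k+\langle f\rangle)$, whereas for $k=0$ those dimensions are $i(f,h)$ and $i(f,g)$ on the nose, as you observe.
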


In this section, we will relate the GSV-index of $\F$ with the $k$-th Tjurina number of both the curve $C$ and the foliation $\F$ with respect to the curve $C$, obtaining a generalization of Proposition \ref{prop_gomez}:  

\begin{theorem}\label{teoA}
\label{teo tk}
 Let $\F$ be a singular foliation at $(\C^2,0)$, and let $C$ be a reduced curve of separatrices of $\F$. Then
    \[\tau^{k}(\F,C)-\tau^{k}(C)=GSV(\F,C)\quad\text{for all}\quad k\geq 0.\]
\end{theorem}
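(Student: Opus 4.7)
\textbf{Proof Proposal for Theorem \ref{teoA}.} The plan is to generalize Gómez-Mont's classical argument (the $k=0$ case in Proposition \ref{prop_gomez}) by combining the identity \eqref{eq_3} with the two colength lemmas of Section \ref{co_dimension}. The starting point is the decomposition
\[
g\omega = h\,df + f\,\eta,\qquad \gcd(f,g)=\gcd(f,h)=1,
\]
which in components reads $gP = h f_x + f\eta_1$ and $gQ = h f_y + f \eta_2$ for $\eta=\eta_1 dx+\eta_2 dy$. Multiplying these relations by an arbitrary element of $\m^k$ and reducing modulo $\langle f\rangle$, I obtain the key ideal identity
\[
\langle gP,gQ\rangle\m^k+\langle f\rangle \;=\; \langle h f_x, h f_y\rangle\m^k+\langle f\rangle,
\]
since the ``error terms'' $f\eta_1 m$ and $f\eta_2 m$ lie in $\langle f\rangle$. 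This identity is the algebraic heart of the argument and should follow by a short direct verification of the two containments.

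Next, I apply Lemma \ref{lemma_gsv 1} to each side of the above identity, with $J=\m^k$ and with the role of $g$ in that lemma played first by the function $g$ above and then by $h$. The left-hand side yields
\[
\dim\frac{\mathcal{O}_2}{\langle gP,gQ\rangle\m^k+\langle f\rangle} \;=\; \tau^k(\F,C)+\dim\frac{\mathcal{O}_2}{\langle g\rangle\m^k+\langle f\rangle},
\]
while applying it to the right-hand side (with $P\!\leadsto\! f_x$, $Q\!\leadsto\! f_y$, $g\!\leadsto\! h$) produces, because $C$ is reduced so that $f_x,f_y$ are coprime to $f$ on each branch in the appropriate sense used in the definition of $\tau^k(C)$,
\[
\dim\frac{\mathcal{O}_2}{\langle hf_x,hf_y\rangle\m^k+\langle f\rangle} \;=\; \tau^k(C)+\dim\frac{\mathcal{O}_2}{\langle h\rangle\m^k+\langle f\rangle}.
\]
Equating these two expressions, the desired formula reduces to computing the difference $\dim\mathcal{O}_2/(\langle h\rangle\m^k+\langle f\rangle)-\dim\mathcal{O}_2/(\langle g\rangle\m^k+\langle f\rangle)$.

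For this last step, I invoke Lemma \ref{lema do gsv} twice, with the roles $(f,g)$ of that lemma taken as $(h,f)$ and $(g,f)$ respectively (legal because $\gcd(f,h)=\gcd(f,g)=1$), to obtain
\[
\dim\frac{\mathcal{O}_2}{\langle h\rangle\m^k+\langle f\rangle} \;=\; \dim\frac{\mathcal{O}_2}{\m^k+\langle f\rangle}+i(h,f),
\]
and the same identity with $h$ replaced by $g$. Subtracting, the common term $\dim\mathcal{O}_2/(\m^k+\langle f\rangle)$ cancels and I conclude
\[
\tau^k(\F,C)-\tau^k(C) \;=\; i(h,f)-i(g,f) \;=\; \mathrm{GSV}(\F,C),
\]
where the last equality is the formula \eqref{gsv_inter}. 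The main obstacle I anticipate is confirming the ideal identity from the first paragraph cleanly at the level of ideals (as opposed to just modulo $f$), but since multiplication by elements of $\m^k$ distributes and the residual terms are visibly in $\langle f\rangle$, this should be routine. Everything else is a bookkeeping chain of the two colength lemmas, and the cancellation of the term $\dim\mathcal{O}_2/(\m^k+\langle f\rangle)$ is precisely what makes the identity independent of $k$.
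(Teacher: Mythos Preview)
Your proposal is correct and follows essentially the same route as the paper: establish the ideal identity $\langle gP,gQ\rangle\m^k+\langle f\rangle=\langle hf_x,hf_y\rangle\m^k+\langle f\rangle$ from \eqref{eq_3}, split each side with Lemma~\ref{lemma_gsv 1}, and finish with Lemma~\ref{lema do gsv} and \eqref{gsv_inter}. One minor remark: your parenthetical about $f_x,f_y$ being coprime to $f$ is unnecessary, since Lemma~\ref{lemma_gsv 1} only requires the multiplier (here $h$, resp.\ $g$) to be coprime to $f$, with no hypothesis on the pair $P,Q$.
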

\begin{proof}
  Let  $\omega=Pdx+Qdy$ be a 1-form inducing $\F$, and let $f(x,y)=0$ be the reduced equation of $C$. By equality (\ref{eq_3}) we get $g\omega=hdf+f\eta$, where $\eta$ is a 1-form and $g,h\in\mathcal{O}_2$, with $g$ and $f$, and $h$ and $f$ relatively prime. 
  \par Hence $gPdx+gQdy=(hf_x+f\eta_1)dx+(hf_y +f\eta_2)dy$, where $\eta=\eta_1 dx+\eta_2 dy$. We obtain 
  \begin{equation}\label{eq_4}
      g P=hf_x+f\eta_1,\quad\text{and}\quad gQ=hf_y +f\eta_2.
  \end{equation}
From equalities (\ref{eq_4}), the properties of ideals, and Lemma \ref{lemma_gsv 1}, we have
  \begin{eqnarray*}
\dim\frac{\mathcal{O}_2}{\langle gP,gQ\rangle\m^k+\langle f\rangle}&=& \dim\frac{\mathcal{O}_2}{\langle hf_x+f\eta_1,hf_y+f\eta_2\rangle\m^k+\langle f\rangle}\\
&=&\dim\frac{\mathcal{O}_2}{\langle hf_x,h f_y\rangle \m^k+\langle f\rangle}\\
&=& \dim\frac{\mathcal{O}_2}{\langle f_x, f_y\rangle \m^k+\langle f\rangle}+\dim\frac{\mathcal{O}_2}{\langle h\rangle \m^k+\langle f\rangle}\\
&=&\tau^{k}(C)+\dim\frac{\mathcal{O}_2}{\langle h\rangle \m^k+\langle f\rangle}.
  \end{eqnarray*}
  Again, by Lemma \ref{lemma_gsv 1} we get
  \[\dim\frac{\mathcal{O}_2}{\langle P,Q\rangle \m^k+\langle f\rangle}+\dim\frac{\mathcal{O}_2}{\langle g\rangle \m^k+\langle f\rangle}=\tau^{k}(C,0)+\dim\frac{\mathcal{O}_2}{\langle h\rangle \m^k+\langle f\rangle}.\]
  Hence, by Lemma \ref{lema do gsv} applied to $J=\m^k$ and equation (\ref{gsv_inter}), we obtain
  \begin{eqnarray*}
  \tau^k(\F,C)-\tau^k(C)&=&\dim\frac{\mathcal{O}_2}{\langle h\rangle \m^{k}+\langle f\rangle}-\dim\frac{\mathcal{O}_2}{\langle g\rangle \m^{k}+\langle f\rangle}\\
  &=& \dim \frac{\mathcal{O}_2}{\m^k+\langle f\rangle}+i(f,h)-\left(\dim \frac{\mathcal{O}_2}{\m^k+\langle f\rangle}+i(f,g)\right)\\
&=& i(f,h)-i(f,g)\\
&=& GSV(\F,C).
\end{eqnarray*}
\end{proof}
We illustrate Theorem \ref{teoA} with the following examples.
\begin{example}
     Let $\F$ be the foliation on $(\mathbb{C}^{2},0)$ defined by $\omega = 4xydx + (y - 2x^{2})dy$. Note that $C = \{y = 0\}$ is the only separatrix curve of $\F$ and that an equation (\ref{eq_3}) for $C$ is $g = 1$, $h = y - x^{2}$, and $\eta = 4xdx$. Thus $GSV(\F,C)=2$, $\tau^k(C)=k$, and $\tau^{k}(\F,C)=k+2$. Hence, we have $GSV(\F,C)= 2=\tau^{k}(\F, C) - \tau^{k}(C).$
\end{example}
\begin{example}
    For the Dulac's foliation $\F$ defined by \[\omega=(ny+x^{n})dx-xdy,\quad n\geq 2,\] we have $\F$ admits a unique separatrix $C=\{x=0\}$. Since $\nu(\F)=1\neq 0=\nu(C)-1$, so $\F$ is not of second type.
    Moreover, for all integer $k\geq 0$, we get
    $\mu^{k}(\F)=\frac{(k+1)(k+2)}{2}$, $\tau^{k}(\F,C)=k+1$, $\tau^{k}(C)=k$, and $GSV(\F,C,0)= 1=\tau^{k}(\F, C) - \tau^{k}(C)$
\end{example}
\begin{example}
    Let $\F_{n,m}:\omega_{n,m}=mxdy-nydx$, and let $C=\{y^m-x^n=0\}$ be a separatrix of $\F_{n,m}$, where $2\leq m\leq n$. 
    We have from \cite[Theorem A, item 1]{Tjurina 2} that
    $$
\tau^{k}(C)=\begin{cases}
			\frac{k^2+3k}{2}+mn-m-n+1, & \text{if $0\leq k< m$ }\\
            mk+\frac{(2n-m)(m-1)}{2}, & \text{$k\geq m$}
		 \end{cases}
$$
$$
\mu^{k}(C)=\begin{cases}
			mn-m-n+1+k^2+k, & \text{if $0\leq k< m$ }\\
            \left(m-\frac{1}{2}\right)k+\frac{(2n-m)(m-1)}{2}+\frac{k^2}{2}, & \text{$k\geq m$}
		 \end{cases}
$$
and 
$$
\tau^{k}(\F_{n,m},C)=\begin{cases}
			\frac{(k+2)(k+1)}{2}, & \text{if $0\leq k< m$ }\\
            \frac{(k+2)(k+1)}{2}-\frac{(k+2-m)(k+1-m)}{2}, & \text{$k\geq m$.}
		 \end{cases}
$$
On the other hand, since 
    \[my^{m-1}\cdot\omega=mx d(y^m-x^n)-mn(y^m-x^n)dx,\] we get
    \begin{eqnarray*}
        GSV(\F_{n,m},C)=m+n-mn.
        \end{eqnarray*}
Note that $GSV(\F_{n,m},C)=\tau^{k}(\F_{n,m},C)-\tau^{k}(C)$ for all $k\geq 0$, verifying Theorem \ref{teo tk}.
\end{example}

\begin{corollary}
    Let $\F$ be a singular foliation at $(\C^2,0)$, and let $C$ be a reduced curve of separatrices of $\F$. Then
     \[\tau^{k}(\F,C)-\tau^{k}(C) = \tau(\F,C)-\tau(C)\quad\text{for all}\quad k\geq 0,\] 
     or equivalently,
     \[\tau^{k}(\F,C)-\tau(\F,C) = \tau^{k}(C)-\tau(C)\quad\text{for all}\quad k\geq 0.\] 
     In particular, $\tau^{k}(\F,C)-\tau(\F,C)$ does not depend on the foliation $\F$.
\end{corollary}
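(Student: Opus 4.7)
The plan is to derive the corollary as an immediate consequence of Theorem \ref{teoA}, exploiting the key fact that the right-hand side $\operatorname{GSV}(\F,C)$ of the identity $\tau^{k}(\F,C)-\tau^{k}(C)=\operatorname{GSV}(\F,C)$ is independent of $k$. No further computation of dimensions or reference to the 1-form description \eqref{eq_3} is needed; everything follows formally.

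First I would invoke Theorem \ref{teoA} twice: once for an arbitrary integer $k\geq 0$, obtaining
\[
\tau^{k}(\F,C)-\tau^{k}(C)=\operatorname{GSV}(\F,C),
\]
and once for the special value $k=0$ (which recovers Proposition \ref{prop_gomez}), giving
\[
\tau(\F,C)-\tau(C)=\operatorname{GSV}(\F,C).
\]
Equating the right-hand sides immediately yields the first claimed identity
\[
\tau^{k}(\F,C)-\tau^{k}(C)=\tau(\F,C)-\tau(C),\qquad k\geq 0.
\]

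Next I would rearrange this identity by moving $\tau^{k}(C)$ and $\tau(\F,C)$ to opposite sides, which produces the equivalent form
\[
\tau^{k}(\F,C)-\tau(\F,C)=\tau^{k}(C)-\tau(C).
\]
Finally, I would observe that the right-hand side depends only on the reduced curve $C$ and on the integer $k$, and not on the foliation $\F$; this proves the ``In particular'' statement.

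Since the argument is a one-line reduction to Theorem \ref{teoA}, there is essentially no technical obstacle; the only care needed is to point out that the case $k=0$ of Theorem \ref{teoA} is exactly the classical Gómez-Mont formula \ref{prop_gomez}, so that applying the theorem at $k=0$ is legitimate and gives a genuinely foliation-dependent quantity which is then cancelled against the general-$k$ expression.
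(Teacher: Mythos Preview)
Your proposal is correct and matches the paper's approach exactly: the paper's proof states simply that the corollary is an immediate consequence of Proposition~\ref{prop_gomez} and Theorem~\ref{teo tk}, which is precisely the two applications of Theorem~\ref{teoA} (at arbitrary $k$ and at $k=0$) that you carry out. Your additional remarks on the rearrangement and the independence from $\F$ are sound elaborations of what the paper leaves implicit.
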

\begin{proof}
   It is an immediate consequence of Proposition \ref{prop_gomez} and Theorem \ref{teo tk}.
\end{proof}
\par The $k$-th Tjurina number of the foliation $\mathcal{F}$ with respect to a separatrix curve $C$ yield the following criterion for dicriticalness.
\begin{corollary}
    Let $C$ be a separatrix curve of $\mathcal{F}$, and suppose that $\tau^{k}(\mathcal{F}, C) < \tau^{k}(C)$ for some $k \geq 0$. Then $\mathcal{F}$ is dicritical at $0 \in \mathbb{C}^{2}$.
\end{corollary}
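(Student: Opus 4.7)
The plan is to apply Theorem~\ref{teo tk} to convert the hypothesis into a sign statement about the GSV-index, and then invoke the classical non-negativity of this index for non-dicritical foliations. Concretely, Theorem~\ref{teo tk} yields
\[
\tau^{k}(\F,C) - \tau^{k}(C) = GSV(\F,C) \qquad \text{for every } k \geq 0,
\]
so the hypothesis $\tau^{k}(\F,C) < \tau^{k}(C)$ for some $k \geq 0$ immediately gives $GSV(\F,C) < 0$. Note that the left-hand side may depend on $k$ while the right-hand side does not, so a single $k$ realizing the strict inequality is enough to pin down the sign of the GSV-index.

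Next I would invoke the classical result, attributed to Brunella, that for a non-dicritical germ of foliation $\F$ at $0 \in \C^2$ and any reduced $\F$-invariant curve $C$ one has $GSV(\F,C) \geq 0$. This is typically proved through the reduction of singularities of $\F$, expressing $GSV(\F,C)$ as a sum of local contributions at the reduced singularities lying on the strict transform of $C$, each of which is non-negative when no dicritical component appears. Taking the contrapositive of this non-negativity, the strict inequality $GSV(\F,C) < 0$ forces $\F$ to be dicritical, which is exactly the desired conclusion.

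The main (and essentially only) obstacle is justifying the non-negativity of $GSV(\F,C)$ in the required generality, since the statement allows $C$ to be any reduced separatrix curve, not necessarily the total union of separatrices. If the available references only treat the total union, I would supplement the argument by exploiting the additivity hidden in the intersection-number formula~\eqref{gsv_inter}: factoring $f = f_1 \cdots f_r$ into irreducible separatrices decomposes the pair $\bigl(i(f,h), i(f,g)\bigr)$, and hence $GSV(\F,C)$, as a sum over the branches of $C$. This reduces the matter to the single-branch case, where non-negativity in the non-dicritical setting can be verified directly at each reduced singularity of the resolution of $\F$ lying over that branch.
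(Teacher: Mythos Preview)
Your argument is correct and follows the same line as the paper: apply Theorem~\ref{teo tk} to obtain $GSV(\F,C)<0$, then invoke Brunella's non-negativity result to force dicriticalness by contraposition. Your worry about generality is handled in the paper by observing that when $\F$ is non-dicritical every separatrix curve $C$ is a \emph{non-dicritical separatrix} in Brunella's sense, so \cite[Proposition~6]{index} applies directly and your additivity workaround is not needed.
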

\begin{proof}
    Suppose, by contradiction, that $\F$ is non-dicritical at $0\in\C^2$. Then $C$ is a non-dicritical separatrix in the sense of Brunella (see \cite[p. 533]{index}), and by \cite[Proposition 6]{index}, we have $GSV(\F,C)\geq 0$. Hence $\tau^{k}(\mathcal{F}, C) \geq \tau^{k}(C)$ for all $k \geq 0$, by Theorem \ref{teoA}. This contradicts the hypothesis.  
\end{proof}

\section{The $k$-th polar intersection index of a foliation}\label{polar_section}
Let $\F$ be a singular foliation on $(\C^2,0)$ defined by $\omega=P(x,y)\,dx+Q(x,y)\,dy$.
\begin{definition}
Let $[\alpha:\beta ]\in \mathbb{P}^1$ be a direction in the one-dimensional projective space.  
The \emph{polar curve} of $\mathcal{F}$ in the direction $[\alpha:\beta]$, denoted by $\Gamma_{[\alpha:\beta]}$, is the set of points defined by
\[
\mathcal{P}^{\mathcal{F}}_{(\alpha:\beta)}= \{(x,y)\in(\mathbb{C}^2,0)\;:\;\alpha P(x,y)+\beta Q(x,y)=0\}.
\]
\end{definition}
The polar curve of a foliation was defined in \cite{Cano}. Observe that the algebraic multiplicity at the origin of $\mathbb{C}^2$ of a generic polar curve coincides with the algebraic multiplicity of $\mathcal{F}$ at that point.
Also note that when $\mathcal{F}$ is the Hamiltonian foliation $df$ associated with a function $f\in\mathcal{O}_2$, the polar curve of $\mathcal{F}$ coincides with the classical polar curve of $f$ in the direction $[\alpha:\beta]$ studied by Teissier \cite{Teissier_inv} and others. According to general results on equisingularity (see \cite{Zariski} and \cite{Teissier_polar}), there exists a Zariski-open subset $U$ of the space $\mathbb{P}^1$ of projection directions such that, for $[\alpha:\beta]\in U$, all polar curves are equisingular. Any element of this set is called the {\it generic polar curve} of the foliation $\mathcal{F}$ and will be denoted by $\mathcal{P}^{\mathcal{F}}$.
\begin{definition}\label{defi_polar}
Let $B =\{h = 0\}$ be a reduced curve invariant by a singular foliation $\mathcal{F}$.  
The \emph{polar intersection number} of $\mathcal{F}$ with respect to $B$ is defined as the intersection number
\[
i_p(\mathcal{P}^{\mathcal{F}}, B)= \dim\frac{\mathcal{O}_2}{\langle \alpha P+\beta Q,\,h\rangle}
\]
where $\mathcal{P}^{\mathcal{F}}$ denotes a generic polar curve associated with the foliation $\mathcal{F}$.
\end{definition}
For properties of this number, see for instance \cite[Section 4]{mol2}.
\begin{definition}
Let $C=\{f=0\}$ be a reduced invariant curve of a singular foliation $\mathcal{F}$. We define the {\it $k$-th polar intersection number} of $\mathcal{F}$ with respect to $C$ as the following dimension:
\[
i^{k}(\mathcal{P}^{\mathcal{F}},C)=\dim\frac{\mathcal{O}_2}{\langle \alpha P+\beta Q,\,f\rangle \m^{k}},
\]
where $\mathcal{P}^{\mathcal{F}}$ denotes a generic polar curve associated with the foliation $\mathcal{F}$.
\end{definition}
Note that when $k=0$, we recover the classical polar intersection number of a foliation $\mathcal{F}$ with respect to $C$, given in Definition \ref{defi_polar}. 

\subsection{Teissier's Lemma for the $k$-th Milnor number} 
 In the case of the Hamiltonian foliation defined by $df=f_x dx+f_y dy$, we have:
\[
i^{k}(\mathcal{P}^{df},C)=\dim\frac{\mathcal{O}_2}{\langle \alpha f_x+\beta f_y,\,f\rangle\m^{k}}.
\]
which generalizes the polar intersection number defined by Teissier \cite{Teissier}. In \cite[Chapter 4, Proposition 1.2]{Teissier}, Teissier proved (in the context of complex hypersurfaces) a formula relating $i(\mathcal{P}^{df},C)$ and $\mu(C)$, known as Teissier's Lemma:
\begin{lemma}[Teissier's Lemma]\label{lemma_tei}
    Let $f\in\mathcal{O}_2$ be such that $C=\{f=0\}$ is a reduced curve at $0\in\C^2$. Then
\[i(\mathcal{P}^{df},C)=\mu(C)+\nu(f)-1.\]
\end{lemma}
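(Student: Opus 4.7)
The plan is to reduce the statement to a branch-wise computation on the generic polar, and then to exploit the chain rule to compare the vanishing orders of $f$ and $f_x$ along the branches of $\{f_y=0\}$. First, by a generic linear change of coordinates I can arrange simultaneously three Zariski-open conditions: $[0:1]$ is a generic polar direction, so that $\Gamma:=\mathcal{P}^{df}=\{f_y=0\}$ is reduced with $\nu(f_y)=\nu(f)-1=m-1$; the curves $C$ and $\Gamma$ share no irreducible branch (automatic from $f$ being reduced with isolated singularity combined with the generic choice of polar); and the line $\{x=0\}$ is transverse to every branch of $\Gamma$. Under these choices, the statement to be proved becomes
\begin{equation*}
i(f,f_y)=\mu(C)+m-1.
\end{equation*}

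Next, I would fix Puiseux parametrizations $\sigma_j(\tau)=(u_j(\tau),v_j(\tau))$ of the irreducible branches $\Gamma=\bigcup_{j}\Gamma_j$. Because $\langle f_x,f_y\rangle$ is $\m$-primary, the curves $\{f_x=0\}$ and $\{f_y=0\}$ also share no component, and by additivity of intersection multiplicity over branches
\begin{equation*}
i(f,f_y)=\sum_{j}\mathrm{ord}_\tau f(\sigma_j(\tau)),\qquad \mu(C)=i(f_x,f_y)=\sum_{j}\mathrm{ord}_\tau f_x(\sigma_j(\tau)).
\end{equation*}
Differentiating $f\circ\sigma_j$ with respect to $\tau$ and using $f_y(\sigma_j(\tau))\equiv 0$ yields
\begin{equation*}
\frac{d}{d\tau}\,f(\sigma_j(\tau))=f_x(\sigma_j(\tau))\,u_j'(\tau)+f_y(\sigma_j(\tau))\,v_j'(\tau)=f_x(\sigma_j(\tau))\,u_j'(\tau).
\end{equation*}
Since $u_j(0)=0$ and $u_j\not\equiv 0$ (transversality of $\{x=0\}$ to $\Gamma_j$), in characteristic zero $\mathrm{ord}_\tau u_j'=\mathrm{ord}_\tau u_j-1$, and comparing orders of the two sides gives the branch-wise identity
\begin{equation*}
\mathrm{ord}_\tau f(\sigma_j(\tau))=\mathrm{ord}_\tau f_x(\sigma_j(\tau))+\mathrm{ord}_\tau u_j(\tau).
\end{equation*}

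Summing over $j$, the $f_x$-term contributes $\mu(C)$, while $\sum_{j}\mathrm{ord}_\tau u_j(\tau)=i(x,f_y)=\nu(f_y)=m-1$, the last equality being precisely the transversality of $\{x=0\}$ to each $\Gamma_j$. Combining the two pieces yields $i(f,f_y)=\mu(C)+m-1$, as required. The main subtle point is the initial coordinate choice: one must check that all three Zariski-open conditions on the linear change of variables are compatible, so that the branch parametrizations are nontrivial in the $u_j$ coordinate (otherwise the chain rule is vacuous on that branch) and so that the transversality computation $i(x,f_y)=\nu(f_y)$ is valid. Once coordinates are fixed, the rest is a routine application of the chain rule and the additivity of intersection multiplicity over branches.
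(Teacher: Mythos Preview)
Your argument is correct. The paper does not supply its own proof of this lemma: it is quoted as a classical result and attributed to Teissier \cite[Chapter~4, Proposition~1.2]{Teissier}, so there is no in-paper argument to compare against. What you have written is precisely the standard two-dimensional proof via Puiseux parametrizations of the branches of the generic polar, which is essentially how the result is established in this setting. The only delicate point is the one you already flag, and it is fine: the three conditions on the linear coordinates (that $[0:1]$ be a generic polar direction so that $\nu(f_y)=\nu(f)-1$ and $\Gamma=\{f_y=0\}$ is reduced, that $C$ and $\Gamma$ share no branch, and that $\{x=0\}$ avoid the tangent cone of $\Gamma$) each exclude only a proper Zariski-closed subset of $\mathrm{GL}_2(\C)$ and are therefore simultaneously satisfiable; once they hold, the chain-rule identity and the equality $\sum_j \mathrm{ord}_\tau u_j=i(x,f_y)=\nu(f_y)$ go through exactly as you wrote.
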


Next, we establish the $k$-th version of Teissier's Lemma \cite[Chapter 4, Proposition 1.2]{Teissier} in dimension 2:
\begin{lemma}
Let $f\in\mathcal{O}_2$ be such that $C=\{f=0\}$ is a reduced curve at $0\in\C^2$. Then, for all integers $k\geq 0$, we have 
\[i^{k}(\mathcal{P}^{df},C)=\mu^{k}(C)+\nu(f)-1.\]
\end{lemma}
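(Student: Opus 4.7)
The plan is to expand both the $k$-th polar intersection number and $\mu^{k}(C)$ using the colength identity from Lemma \ref{ca}, and then conclude via the classical Teissier's Lemma \ref{lemma_tei}. Set $L := \alpha f_x + \beta f_y$, so that $\mathcal{P}^{df} = \{L = 0\}$. By choosing $[\alpha:\beta]\in\mathbb{P}^1$ generic I may assume that $L$ and $f$ are relatively prime and that $\nu(L) = \nu(f) - 1$; after a further generic linear change of coordinates I may also assume $\nu(f_x) = \nu(f) - 1$.

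First, I apply Lemma \ref{ca} with $\psi = L$, $\varphi = f$, and $J = \m^{k}$. Since $\nu(L) < \nu(f)$ and $(L,f) = 1$, the lemma gives
\[
i^{k}(\mathcal{P}^{df}, C)
= i(L,f) + \dim \frac{\mathcal{O}_2}{\langle L\rangle + \m^{k}} + \dim \frac{\mathcal{O}_2}{\m^{k}}.
\]
The classical Teissier's Lemma \ref{lemma_tei} then replaces $i(L,f)$ by $\mu(C) + \nu(f) - 1$.

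Next, I apply Lemma \ref{ca} once more, this time with $\psi = f_x$, $\varphi = f_y$, and $J = \m^{k}$. Here $f_x$ and $f_y$ are coprime because $C$ has an isolated singularity, and $\nu(f_x) \leq \nu(f_y)$ by our generic choice of coordinates. Using $i(f_x, f_y) = \mu(C)$, the lemma yields
\[
\mu^{k}(C)
= \mu(C) + \dim \frac{\mathcal{O}_2}{\langle f_x\rangle + \m^{k}} + \dim \frac{\mathcal{O}_2}{\m^{k}}.
\]
By Remark \ref{Remark1}, the colength $\dim \mathcal{O}_2/(\langle h\rangle + \m^{k})$ depends only on $k$ and $\nu(h)$; since $\nu(L) = \nu(f_x) = \nu(f) - 1$, the two ``extra'' terms in the expansions cancel after subtraction. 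Hence
\[
i^{k}(\mathcal{P}^{df}, C) - \mu^{k}(C) = i(L,f) - \mu(C) = \nu(f) - 1,
\]
which is the claimed identity.

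The main technical obstacle I anticipate is the genericity bookkeeping: I must verify that a single choice of linear coordinates and of $[\alpha:\beta]$ simultaneously achieves $\nu(f_x) = \nu(L) = \nu(f) - 1$ and $(L,f) = 1$, and confirm that both sides of the claimed formula are independent of these choices — the polar intersection number being defined with respect to a generic direction in $\mathbb{P}^1$, and $\mu^{k}(C)$ being an analytic invariant of $C$. Once these genericity points are settled, the proof reduces to the two applications of Lemma \ref{ca} together with the invocation of the classical Teissier identity.
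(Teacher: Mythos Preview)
Your proposal is correct and follows essentially the same route as the paper: both proofs expand $i^{k}(\mathcal{P}^{df},C)$ and $\mu^{k}(C)$ via Lemma~\ref{ca}, use Remark~\ref{Remark1} to match the $\dim\mathcal{O}_2/(\langle L\rangle+\m^k)$ and $\dim\mathcal{O}_2/(\langle f_x\rangle+\m^k)$ terms (since $\nu(L)=\nu(f_x)$ for generic data), and then invoke the classical Teissier Lemma~\ref{lemma_tei}. Your explicit discussion of the genericity bookkeeping is, if anything, more careful than the paper's.
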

\begin{proof}
By applying Lemma \ref{ca} to $J=\m^{k}$, $\psi=\mathcal{P}^{f}=\alpha f_x+\beta f_y$, where $\alpha$ and $\beta$ are generic, and $\varphi=f$, we obtain 
\begin{equation}\label{eq_30}
i^k(\mathcal{P}^{df},C)=i(\mathcal{P}^{df},C)+\dim\frac{\mathcal{O}_2}{\m^k}+\dim\frac{\mathcal{O}_2}{\langle \alpha f_x+\beta f_y\rangle+\m^k}
\end{equation}
Again, assuming $\nu(f_x)\leq\nu(f_y)$, by applying Lemma \ref{ca} to $J=\m^{k}$, $\psi=f_x$, and $\varphi=f_y$, we obtain 
\begin{equation}\label{eq_40}
\mu^{k}(C)=\mu(C)+\dim\frac{\mathcal{O}_2}{\m^k}+\dim\frac{\mathcal{O}_2}{\langle f_x\rangle+\m^k}
\end{equation}
From (\ref{eq_30}) and (\ref{eq_40}), we get:
\[i^k(\mathcal{P}^{df},C)-\mu^{k}(C)=i(\mathcal{P}^{df},C)-\mu(C)\]
since $\dim\dfrac{\mathcal{O}_2}{\langle \alpha f_x+\beta f_y\rangle+\m^k}=\dim\dfrac{\mathcal{O}_2}{\langle f_x\rangle+\m^k}$ for generic $\alpha$ and $\beta$, by Remark \ref{Remark1}. The proof follows from Lemma \ref{lemma_tei}.
\end{proof}
We now present an example illustrating our result above.

\begin{example}
    Let $C$ be an isolated binomial singularity defined by $f=x^2+y^2$. Then, by \cite[Proposition 2.7]{k-Milnor},
     $$
\mu^{k}(C)= \dim\frac{\mathcal{O}_2}{\langle f_x,f_y\rangle\m^{k}}=\begin{cases}
			1+k^{2}+k,& \text{if $0\leq k< 2$ }\\
            \frac{3}{2}k+1+\frac{k^{2}}{2}, & \text{if $k\geq 2$}
		 \end{cases}
$$
and the $k$-th polar intersection number of $df$ with respect to $C$ is 
$$
i^{k}(\mathcal{P}^{df},C)= \dim\frac{\mathcal{O}_2}{\langle f_x+ f_y, f\rangle\m^k}=\begin{cases}
			2+k^{2}+k,& \text{if $0\leq k< 2$ }\\
            \frac{3}{2}k+2+\frac{k^{2}}{2}, & \text{if $k\geq 2$}
		 \end{cases}
$$
thus, $i^{k}(\mathcal{P}^{df},C)-\mu^{k}(C)=\nu (C)-1$.
\end{example}
Now, we present a formula that relates the $k$-th intersection indices of a foliation $\F$ and an invariant curve $C=\{f=0\}$ of $\F$ with the index $GSV(\F,C)$.
\begin{theorem}
    Let $\F$ be a non-dicritical foliation on $(\C^2,0)$, and let $C=\{f=0\}$ be the total union of separatrices of $\F$. If $\F$ is of second type, then
    \[i^k(\mathcal{P}^{\F},C)-i^k(\mathcal{P}^{df},C)=i(\mathcal{P}^{\F},C)-i(\mathcal{P}^{df},C)=GSV(\F,C).\]
\end{theorem}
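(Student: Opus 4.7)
The strategy is to reduce the $k$-th identity to the $k=0$ case using Lemma \ref{ca} and Remark \ref{Remark1}, and then to unpack the defining relation \eqref{eq_3} of the GSV-index.

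First, I will verify the hypotheses of Lemma \ref{ca}. For a generic direction $[\alpha:\beta]\in\mathbb{P}^{1}$, both polar generators $\alpha P+\beta Q$ and $\alpha f_x+\beta f_y$ are coprime to $f$ (the generic polar curve shares no branch with $C$) and realize the minimum multiplicities $\nu(\F)$ and $\nu(f)-1$, respectively. Since $\F$ is non-dicritical and of second type with total separatrix $C$, its balanced divisor equals $C$, so Proposition \ref{prop:Equa-Ba} yields $\nu(\F)=\nu(C)-1=\nu(f)-1$. Thus both polar generators share the same multiplicity, which is strictly less than $\nu(f)$, and Lemma \ref{ca} applies in both cases with $\psi$ the polar generator, $\varphi=f$, and $J=\m^{k}$.

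Second, applying Lemma \ref{ca} to both sides I obtain
\begin{align*}
i^{k}(\mathcal{P}^{\F},C)&=i(\mathcal{P}^{\F},C)+\dim\tfrac{\mathcal{O}_{2}}{\m^{k}}+\dim\tfrac{\mathcal{O}_{2}}{\langle\alpha P+\beta Q\rangle+\m^{k}},\\
i^{k}(\mathcal{P}^{df},C)&=i(\mathcal{P}^{df},C)+\dim\tfrac{\mathcal{O}_{2}}{\m^{k}}+\dim\tfrac{\mathcal{O}_{2}}{\langle\alpha f_{x}+\beta f_{y}\rangle+\m^{k}}.
\end{align*}
By Remark \ref{Remark1}, the dimension $\dim\mathcal{O}_{2}/(\langle g\rangle+\m^{k})$ depends only on $\nu(g)$ and $k$; hence the last two terms coincide and subtraction gives
\[
i^{k}(\mathcal{P}^{\F},C)-i^{k}(\mathcal{P}^{df},C)=i(\mathcal{P}^{\F},C)-i(\mathcal{P}^{df},C),
\]
which is independent of $k$, establishing the first equality of the theorem.

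Third, to identify this common value with $GSV(\F,C)$, I will decompose the 1-form identity $g\omega=h\,df+f\eta$ from \eqref{eq_3} componentwise as $gP=hf_{x}+f\eta_{1}$ and $gQ=hf_{y}+f\eta_{2}$, yielding
\[
g(\alpha P+\beta Q)\equiv h(\alpha f_{x}+\beta f_{y})\pmod{f}.
\]
Since intersection with a reduced $f$ depends only on classes modulo $\langle f\rangle$ and is additive over products of factors coprime to $f$, this congruence gives $i(f,g)+i(\mathcal{P}^{\F},C)=i(f,h)+i(\mathcal{P}^{df},C)$, and formula \eqref{gsv_inter} immediately delivers $i(\mathcal{P}^{\F},C)-i(\mathcal{P}^{df},C)=i(f,h)-i(f,g)=GSV(\F,C)$.

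The main obstacle is the cancellation in the second step: it rests precisely on the identity $\nu(\F)=\nu(f)-1$, which requires both the non-dicritical and the second type hypotheses via Proposition \ref{prop:Equa-Ba}. Without second type, tangent saddle-nodes in the resolution can force $\nu(\F)<\nu(f)-1$, the auxiliary dimensions coming from Lemma \ref{ca} would no longer match, and the difference $i^{k}(\mathcal{P}^{\F},C)-i^{k}(\mathcal{P}^{df},C)$ would acquire a nontrivial dependence on $k$.
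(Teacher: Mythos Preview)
Your proof is correct and follows exactly the paper's route: apply Lemma~\ref{ca} with $J=\m^k$, $\psi$ the generic polar generator and $\varphi=f$, use Proposition~\ref{prop:Equa-Ba} together with Remark~\ref{Remark1} to cancel the extra terms, and reduce to the $k=0$ identity $i(\mathcal{P}^{\F},C)-i(\mathcal{P}^{df},C)=GSV(\F,C)$, which the paper simply cites from \cite[Proposition~5.4]{Evelia} while you supply a self-contained derivation via \eqref{eq_3} and \eqref{gsv_inter}. One inessential slip in your closing remark: when $\F$ fails to be of second type the tangency excess is nonnegative, so one gets $\nu(\F)>\nu(f)-1$, not $\nu(\F)<\nu(f)-1$.
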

\begin{proof}
Suppose that $\omega=Pdx+Qdy$. By applying Lemma \ref{ca} to $J=\m^{k}$, $\psi=\mathcal{P}^{\F}=\alpha P+\beta Q$, and $\varphi=f$, we have
\[i^k(\mathcal{P}^{\F},C)=i(\mathcal{P}^{\F},C)+\dim\frac{\mathcal{O}_2}{\m^{k}}+\dim\frac{\mathcal{O}_2}{\langle \alpha P+\beta Q\rangle+\m^{k}},\]
where $\alpha$ and $\beta$ are generic. Similarly, by Lemma \ref{ca}, we obtain
\[i^k(\mathcal{P}^{df},C)=i(\mathcal{P}^{df},C)+\dim\frac{\mathcal{O}_2}{\m^{k}}+\dim\frac{\mathcal{O}_2}{\langle \alpha f_x+\beta f_y\rangle+\m^{k}}.\]
Since $\F$ is of second type, then $\nu(\F)=\nu(\alpha P+\beta Q)=\nu(C)-1=\nu(\alpha f_x+\beta f_y)$, by Proposition \ref{prop:Equa-Ba}. Consequently 
\[\dim\frac{\mathcal{O}_2}{\langle \alpha P+\beta Q\rangle+\m^{k}}=\dim\frac{\mathcal{O}_2}{\langle \alpha f_x+\beta f_y\rangle+\m^{k}},\]
by Remark \ref{Remark1}. The proof ends by taking the difference $i^k(\mathcal{P}^{\F},C)-i^k(\mathcal{P}^{df},C)$ and observing that $i(\mathcal{P}^{\F},C)-i(\mathcal{P}^{df},C)=GSV(\F,C)$ (see \cite[Proposition 5.4]{Evelia}).
\end{proof}

\section{A bound for the $k$-th Milnor number of a foliation}\label{bound_section}
We now present the following result. The proof is an adaptation of \cite[Theorem 1.1]{Liu} and \cite[Theorem 1.3]{Hussain_1}.
\begin{theorem}
    Let $\F$ be a singular holomorphic foliation of second type at $0\in\C^2$ given by the 1-form $\omega=P(x,y)dx+Q(x,y)dy$. Let $\B=\B_0-\B_{\infty}$ be a balanced divisor of separatrices for $\F$ with $\B_0=\{f(x,y)=0\}$. Then
    \[\tau^{k}(\F,\B_0)\leq \mu^{k}(\F)\leq 2\tau^{k}(\F,\B_0)+\frac{k(k+1)}{2}.\]
\end{theorem}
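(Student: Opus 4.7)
\emph{Proof plan.} The lower bound is immediate: the inclusion $\langle P,Q\rangle\m^{k}\subseteq\langle P,Q\rangle\m^{k}+\langle f\rangle$ induces a surjection
\[
\mathcal{O}_{2}/\langle P,Q\rangle\m^{k}\twoheadrightarrow\mathcal{O}_{2}/(\langle P,Q\rangle\m^{k}+\langle f\rangle),
\]
so that $\tau^{k}(\F,\B_{0})\leq\mu^{k}(\F)$. For the upper bound, following the strategy of \cite{Liu,Hussain_1}, set $I:=(\langle P,Q\rangle\m^{k}:\langle f\rangle)$ and use the short exact sequence
\[
0\longrightarrow\mathcal{O}_{2}/I\xrightarrow{\ \cdot f\ }\mathcal{O}_{2}/\langle P,Q\rangle\m^{k}\longrightarrow\mathcal{O}_{2}/(\langle P,Q\rangle\m^{k}+\langle f\rangle)\longrightarrow 0,
\]
which gives $\mu^{k}(\F)=\tau^{k}(\F,\B_{0})+\dim\mathcal{O}_{2}/I$. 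It therefore suffices to prove the estimate $\dim\mathcal{O}_{2}/I\leq\tau^{k}(\F,\B_{0})+k(k+1)/2$.

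The bound on $\dim\mathcal{O}_{2}/I$ will follow from two inclusions. First, I would prove $\langle P,Q,f\rangle\m^{k}\subseteq I$: the containment $\langle P,Q\rangle\m^{k}\subseteq I$ is automatic, while $f\m^{k}\subseteq I$ amounts to $f^{2}\m^{k}\subseteq\langle P,Q\rangle\m^{k}$, which in turn follows at once from the single membership $f^{2}\in\langle P,Q\rangle$. Granting this, $\dim\mathcal{O}_{2}/I\leq\dim\mathcal{O}_{2}/\langle P,Q,f\rangle\m^{k}$. Next, the natural surjection
\[
\mathcal{O}_{2}/\langle P,Q,f\rangle\m^{k}\twoheadrightarrow\mathcal{O}_{2}/(\langle P,Q\rangle\m^{k}+\langle f\rangle)
\]
has kernel equal to the image of $\langle f\rangle$, and a direct computation of the colon $\langle P,Q,f\rangle\m^{k}:\langle f\rangle=I+\m^{k}$ identifies this kernel, via multiplication by $f$, with $\mathcal{O}_{2}/(I+\m^{k})$; since $I+\m^{k}\supseteq\m^{k}$, its dimension is at most $\dim\mathcal{O}_{2}/\m^{k}=k(k+1)/2$. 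Adding the two bounds gives the required inequality, and hence $\mu^{k}(\F)\leq 2\tau^{k}(\F,\B_{0})+k(k+1)/2$.

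The main obstacle is the membership $f^{2}\in\langle P,Q\rangle$. Since every irreducible component of $\B_{0}$ is a separatrix of $\F$, the function $f$ is $\F$-invariant and satisfies $Pf_{y}-Qf_{x}=fh$ for some $h\in\mathcal{O}_{2}$; this relation by itself does not imply $f^{2}\in\langle P,Q\rangle$. The route I would take is to show $f\in\overline{\langle P,Q\rangle}$, the integral closure of $\langle P,Q\rangle$ in $\mathcal{O}_{2}$, by a valuation-by-valuation analysis along the reduction of singularities of $\F$: along each irreducible component $D$ of the exceptional divisor the orders $\nu_{D}(P)$, $\nu_{D}(Q)$ and $\nu_{D}(f)$ are controlled by the second-type equality $\nu(\F)=\nu(\B)-1$ of Proposition~\ref{prop:Equa-Ba}, and this should yield $\nu_{D}(f)\geq\min(\nu_{D}(P),\nu_{D}(Q))$ at every such $D$. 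The Brian\c{c}on–Skoda theorem in dimension two, $\overline{J}^{\,2}\subseteq J$, then produces $f^{2}\in\overline{\langle P,Q\rangle}^{\,2}\subseteq\langle P,Q\rangle$, which closes the argument.
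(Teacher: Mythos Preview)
Your proof is correct and follows essentially the same strategy as the paper: both reduce the upper bound to the membership $f^{2}\in\langle P,Q\rangle$ and then exploit the resulting filtration $M^{k}_{\F}\supset(f)\supset(f)\m^{k}\supset(f^{2})\m^{k}=0$ (your colon-ideal estimate $\dim\mathcal{O}_{2}/I\le\tau^{k}(\F,\B_{0})+k(k+1)/2$ is exactly the paper's bound on $\dim(f)$, since $\mathcal{O}_{2}/I\cong(f)\subset M^{k}_{\F}$). The only difference is that the paper invokes $f^{2}\in\langle P,Q\rangle$ directly as the Brian\c{c}on--Skoda theorem for second-type foliations with balanced divisor \cite{Skoda}, whereas you outline its proof via $f\in\overline{\langle P,Q\rangle}$ plus the classical Brian\c{c}on--Skoda inclusion---which is in fact the argument carried out in \cite{Skoda}.
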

\begin{proof}
We only need to show that $\mu^{k}(\F)\leq 2\tau^{k}(\F,\B_0)+\frac{k(k+1)}{2}$. Let us consider the following exact sequence:
\[0\to Ker(f)\to M^{k}_{\F}\xrightarrow{f} M^{k}_{\F}\to T^{k}_{\F,\B_0}\to 0,\]
where $M^{k}_{\F}=\mathcal{O}_2/\m^{k}\langle P,Q\rangle$, $T^{k}_{\F,\B_0}=\mathcal{O}_2/(\m^{k}\langle P,Q\rangle+\langle f\rangle)$, the middle map is multiplication by $f$, and $Ker(f)$ is the kernel of this map.
\par  According to the Brian\c{c}on-Skoda theorem for foliations \cite{Skoda}, we have $f^2\in\langle P,Q\rangle$, and therefore $(f^2)\m^k=0$ in $M^{k}_{\F}$. As a consequence, we obtain a finite decreasing filtration:
\[M^{k}_{\F}\supset (f)\supset (f)\m^k\supset (f^2)\m^k=0.\]
Now consider the following exact sequence:
\[0\to Ker(f)\cap (f)\m^k\to (f)\m^k\xrightarrow{f} (f)\m^k\to (f)\m^k/(f^2)\m^k\to 0,\]
where the middle map is multiplication by $f$. Then
\[\dim_\C\left\{(f)\m^k/(f^2)\m^k \right\}=\dim_{\C}(Ker(f)\cap (f)\m^k)\leq \dim_\C Ker(f)=\tau^{k}(\F,\B_0,0).\] Thus
\begin{eqnarray*}
    \mu^{k}(\F)&=&\dim_\C M^{k}_{\F}=\dim_\C T^k_{\F,\B_0}+\dim_\C \frac{(f)}{(f)\m^k}+\dim_{\C}\frac{(f)\m^k}{(f^2)\m^k}\\
    &\leq& 2\tau^{k}(\F,\B_0)+\frac{k(k+1)}{2}.
\end{eqnarray*}
\end{proof}

\section{On $k$-th Milnor and Tjurina numbers for quasi-homogeneous foliations}\label{section_quasi}

In this section, we consider quasi-homogeneous foliations as defined in \cite[p.~178]{David}. Given a germ of a singular holomorphic foliation $\F$ defined by $\omega = P\,dx + Q\,dy$, assume that $\F$ is non-dicritical and let $C = \{ f = 0 \}$ be the total union of separatrices of $\F$ at $0 \in \C^2$.  We say that $\F$ is a \emph{quasi-homogeneous foliation} if $f \in \langle P, Q \rangle$.

\par We prove the following:
\begin{theorem}
Let $\mathcal{F}$ be a non-dicritical generalized curve foliation defined by $\omega = P(x,y)\,dx + Q(x,y)\,dy$ at $0\in\C^2$. Suppose that $\F$ is a quasi-homogeneous foliation. Let \( C = \{f(x,y) = 0\} \) be the total union of separatrices of $\F$ at $0\in\C^2$. Then,  
\[
    \mu^{k}(\mathcal{F}) = \tau^{k}(\mathcal{F},C) + \frac{k(k-1)}{2},\quad \forall\, k\geq 1.
\]
\end{theorem}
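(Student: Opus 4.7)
The strategy is to analyze the kernel of multiplication by $f$ on $\mathcal{O}_2/\langle P,Q\rangle\m^k$. Since $f\in\langle P,Q\rangle$ by Mattei's characterization of quasi-homogeneous foliations~\cite{quasiMattei}, the short exact sequence
\[
0 \longrightarrow \frac{\mathcal{O}_2}{(\langle P,Q\rangle\m^k : f)} \xrightarrow{\,\cdot f\,} \frac{\mathcal{O}_2}{\langle P,Q\rangle\m^k} \longrightarrow \frac{\mathcal{O}_2}{\langle P,Q\rangle\m^k + \langle f\rangle} \longrightarrow 0
\]
consists of finite-dimensional $\C$-vector spaces, and additivity of dimension yields
\[
\mu^{k}(\F) - \tau^{k}(\F,C) = \dim\frac{\mathcal{O}_2}{(\langle P,Q\rangle\m^k : f)}.
\]
The entire theorem therefore reduces to the colon ideal identity
\[
(\langle P,Q\rangle\m^k : f) = \m^{k-1}\quad(k\geq 1),
\]
after which $\dim\mathcal{O}_2/\m^{k-1}=k(k-1)/2$ completes the proof.

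For the inclusion ``$\subseteq$'' I would use a direct order-of-vanishing estimate: every nonzero element of $\langle P,Q\rangle\m^k$ has algebraic multiplicity at least $\nu(\F)+k$. Since $\F$ is a non-dicritical generalized curve foliation, it is of second type, so Proposition~\ref{prop:Equa-Ba} gives $\nu(\F)=\nu(C)-1=\nu(f)-1$. Thus if $gf\in\langle P,Q\rangle\m^k$, the additivity of $\nu$ on the domain $\mathcal{O}_2$ yields $\nu(g)+\nu(f)\geq\nu(\F)+k$, hence $\nu(g)\geq k-1$ and $g\in\m^{k-1}$.

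For the reverse inclusion ``$\supseteq$'' I would invoke the sharper form of Mattei's theorem: there exist coordinates and weights $(w_1,w_2)$ in which $\omega$ is quasi-homogeneous, and the Euler vector field $R=w_1x\,\partial_x+w_2y\,\partial_y$ is tangent to $C$, so $\omega(R)=w_1xP+w_2yQ$ vanishes along the reduced curve $\{f=0\}$ and hence equals $c\cdot f$ for some $c\in\mathcal{O}_2$. A quasi-degree comparison, combined with the Milnor--Orlik formula applied to both sides of the generalized-curve equality $\mu(\F)=\mu(C)=\mu(f)$, forces $\mathrm{qdeg}(\omega)=\mathrm{qdeg}(f)$ and hence $c\in\C^{*}$. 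Dividing by $c$ yields the presentation $f=\tfrac{w_1x}{c}P+\tfrac{w_2y}{c}Q$ with both coefficients in $\m$; consequently for any $g\in\m^{k-1}$ one has $g\cdot\tfrac{w_1x}{c},\ g\cdot\tfrac{w_2y}{c}\in\m^{k}$, so that $gf\in\langle P,Q\rangle\m^k$.

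The main obstacle is precisely the verification that $c\in\C^{*}$, equivalently the refinement $f\in\m\langle P,Q\rangle$ of Mattei's condition $f\in\langle P,Q\rangle$. Without the generalized-curve hypothesis this can fail (when $\mathrm{qdeg}(\omega)>\mathrm{qdeg}(f)$, $c$ would lie in $\m$ and the inclusion ``$\supseteq$'' would already break down at $k=1$), so the generalized-curve assumption is indispensable for upgrading Mattei's identity to the form needed to close the proof.
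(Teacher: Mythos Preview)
Your approach is essentially the paper's: both reduce via the same short exact sequence to the identity $(\langle P,Q\rangle\m^k:f)=\m^{k-1}$ (equivalently $\m^k\langle P,Q\rangle\cap\langle f\rangle=\m^{k-1}f$) and establish the key inclusion $f\in\m\langle P,Q\rangle$ from Mattei's normal form together with the Euler relation. Two minor differences are worth noting: your order-of-vanishing argument for ``$\subseteq$'' is a clean, self-contained replacement for the paper's appeal to \cite[Lemma~2.5]{k-Milnor}; and for the unit-ness of $c$ the paper does not need $\omega$ itself to be quasi-homogeneous nor a Milnor--Orlik comparison---contracting Mattei's identity $g\omega=df+h(\beta y\,dx-\alpha x\,dy)$ with the radial field $R=\alpha x\,\partial_x+\beta y\,\partial_y$ annihilates the second term and gives $g\,\omega(R)=d\cdot f$ directly, so $gc=d\in\C^{*}$ and $c$ is automatically a unit.
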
 

\begin{proof}
First, we show that $\m^{k-1}\langle f\rangle\subset \m^k\langle P,Q\rangle$.
Indeed, by Mattei’s theorem \cite[Th\'eor\`eme~A]{quasiMattei}, there exists a coordinate system $(x, y)$ such that, in this system, $f$ is a quasi-homogeneous polynomial of degree $d$ with weights $(\alpha, \beta)$, and there exist nonzero $g \in \mathcal{O}_2$ and $h \in \mathcal{O}_2$ such that 
\[
g\omega = df + h(\beta y\,dx - \alpha x\,dy),
\]
where $\alpha, \beta \in \mathbb{N}$. Therefore, we have
\begin{eqnarray*}
    \omega = P\,dx + Q\,dy &=& \frac{df + h(\beta y\,dx - \alpha x\,dy)}{g}\\
    &=& \frac{f_x\,dx + f_y\,dy + h(\beta y\,dx - \alpha x\,dy)}{g}\\
    &=& \frac{(f_x+ \beta h y)\,dx}{g} + \frac{(f_y - \alpha h x)\,dy}{g}.
\end{eqnarray*}

Thus, $P = \frac{f_x+ \beta h y}{g}$ and $Q = \frac{f_y - \alpha h x}{g}$.

Without loss of generality, we can assume that $g \equiv 1$, so 
\[
P = f_x+ \beta h y, \quad 
Q = f_y - \alpha h x.
\]
Hence,
\[
\alpha x P = \alpha x f_x+ \alpha\beta x (h y), \qquad 
\beta y Q = \beta y f_y - \beta\alpha y(h x).
\]
Therefore, 
\[
\alpha x P + \beta y Q = \alpha x f_x+ \beta y  f_y,
\]
and by Euler’s identity, we have 
\[
\alpha x P + \beta y Q = d \cdot f 
\Longrightarrow 
\langle f \rangle \subset \m\langle P, Q \rangle 
\Longrightarrow 
\m^{k-1} \langle f \rangle \subset \m^{k} \langle P, Q \rangle, 
\quad \text{for all } k \geq 1.
\]

\par Next, using the short exact sequence
\[
0 \longrightarrow \frac{ \m^{k} \langle P,Q \rangle + \langle f \rangle}{\m^{k} \langle P,Q \rangle} 
\longrightarrow 
\frac{\mathcal{O}_{2}}{\m^{k} \langle P,Q \rangle} 
\longrightarrow 
\frac{\mathcal{O}_{2}}{\m^{k} \langle P,Q \rangle + \langle f \rangle} 
\longrightarrow 0,
\]
we have 
\[
\dim \frac{\mathcal{O}_{2}}{\m^{k} \langle P, Q\rangle}  
= 
\dim  \frac{ \m^{k} \langle P,Q \rangle + \langle f \rangle}{\m^{k} \langle P,Q \rangle}  
+ 
\dim  \frac{\mathcal{O}_{2}}{\m^{k} \langle P,Q \rangle + \langle f \rangle}.
\]  
Therefore, we get
\[
\mu^{k} (\mathcal{F}) = \tau^{k}(\mathcal{F}, C) + \dim \frac{ \m^{k} \langle P,Q \rangle + \langle f \rangle}{\m^{k} \langle P,Q \rangle}.
\]
Next, by applying \cite[Lemma~2.5]{k-Milnor} and using that \( \m^{k-1} \langle f \rangle \subset \m^{k} \langle P,Q \rangle \), we have
\[
\frac{ \m^{k} \langle P,Q \rangle + \langle f \rangle}{\m^{k} \langle P,Q \rangle} 
\cong 
\frac{\langle f \rangle \mathcal{O}_{2}}{\m^{k} \langle P,Q \rangle \cap \langle f \rangle \mathcal{O}_{2}} 
\cong 
\frac{\langle f \rangle \mathcal{O}_{2}}{\m^{k-1} \langle f \rangle} 
\cong 
\frac{\mathcal{O}_{2}}{\m^{k-1}}.
\]
Therefore, we conclude
\[
\mu^{k}(\mathcal{F}) = \tau^{k}(\mathcal{F}, C) +  \frac{k(k-1)}{2}.
\]
\end{proof}

\vspace{2cm}
\noindent{\bf{Acknowledgments}}\\
 The authors are grateful to Marcelo E. Hernandes for his helpful remarks and comments, which helped to improve the presentation of this paper. We also thank Gert-Martin Greuel for bringing to our attention his paper ``Mather–Yau theorem in positive characteristic."

\vspace{1cm}

\noindent{\bf Data Availability Statement:} 
Data sharing is not applicable to this article as no data sets were generated or analyzed during the current study.
\vspace{1cm}

\noindent{\bf Declarations
Conflict of Interest:} The authors declare that they have no conflict of interest.

\end{document}